\documentclass{article}
\usepackage{graphicx} 
\usepackage{amsmath, amsfonts, amsthm}
\usepackage[inner=3cm, outer=3cm, top=2cm, bottom=2cm]{geometry}
\usepackage{mathtools}
\usepackage{mathabx}
\usepackage{xcolor}
\usepackage{natbib}
\usepackage{appendix}
\usepackage{url}
\usepackage{bbm}
\usepackage{multirow}
\usepackage{placeins}
\usepackage{authblk}
\usepackage[plainpages=false,pdfpagelabels,hidelinks,unicode]{hyperref}

\newcommand{\mean}[1]{\langle #1 \rangle}
\newcommand{\norm}[1]{\lVert #1 \rVert}
\newcommand{\diffd}{\mathrm{d}}
\newcommand{\eps}{\varepsilon}
\newcommand{\pde}[1]{\frac{\partial}{\partial #1}}

\newcommand{\R}{\mathbb{R}}
\newcommand{\mb}[1]{\mathbf #1}
\newcommand{\brho}{\boldsymbol\rho}

\newcommand{\D}{\mathsf{D}}
\renewcommand{\S}{\mathsf{S}}
\newcommand{\E}{\mathsf{E}}
\renewcommand{\H}{\mathsf{H}}

\newcommand{\Q}{\mathsf{Q}}
   
\newcommand{\A}{\mathsf{A}}

\newcommand{\X}{\mathsf{X}}
\newcommand{\G}{\mathsf{G}}
\newcommand{\bD}{\bar{\mathsf{D}}}
\newcommand{\bH}{\bar{\mathsf{H}}}

\newcommand{\bQ}{\bar{\mathsf{Q}}}
\newcommand{\bE}{\bar{\mathsf{E}}}
\newcommand{\bS}{\bar{\mathsf{S}}}
\newcommand{\tD}{\tilde{\mathsf{D}}}
\newcommand{\tQ}{\tilde{\mathsf{Q}}}
\newcommand{\I}{\mathsf{I}}
\newcommand{\It}{\I_{n_t}}
\newcommand{\Ix}{\I_{n_x}}
\newcommand{\ro}{\boldsymbol{\rho}}
\newcommand{\g}{\boldsymbol{g}}

\newcommand{\x}{\boldsymbol{x}}

\renewcommand{\t}{\boldsymbol{t}}

\newcommand{\bt}{\bar{\boldsymbol{t}}}
\newcommand{\SAT}{\mathsf{SAT}}
\newcommand{\bSAT}{\widebar{\SAT}}

\newcommand{\sB}{{\scriptscriptstyle \mathsf{B}}}
\newcommand{\sT}{{\scriptscriptstyle \mathsf{T}}}
\newcommand{\sR}{{\scriptscriptstyle \mathsf{R}}}
\newcommand{\sL}{{\scriptscriptstyle \mathsf{L}}}
\newcommand{\sG}{{\scriptscriptstyle \G}}

\newcommand{\sss}{\scriptscriptstyle}

\newcommand{\rone}{\text{\MakeUppercase{\romannumeral 1}}}
\newcommand{\rtwo}{\text{\MakeUppercase{\romannumeral 2}}}
\newcommand{\rthree}{\text{\MakeUppercase{\romannumeral 3}}}
\newcommand{\srone}{{\scriptscriptstyle \text{\MakeUppercase{\romannumeral 1}}}}
\newcommand{\srtwo}{{\scriptscriptstyle \text{\MakeUppercase{\romannumeral 2}}}}
\newcommand{\srthree}{{\scriptscriptstyle \text{\MakeUppercase{\romannumeral 3}}}}

\newtheorem{theorem}{Theorem}[section]
\newtheorem{lemma}[theorem]{Lemma}

\theoremstyle{definition}
\newtheorem{definition}[theorem]{Definition}

\newenvironment{keywords}{\par\textbf{Key words.}}{\par}
\newenvironment{AMS}{\par\textbf{AMS subject classification.}}{\par}

\title{Stable and asymptotic preserving space-time discretizations of a linear kinetic transport equation in diffusive scaling}
\author[1]{Anita Gjesteland}
\affil[1]{{Department of Applied Mathematics, University of Waterloo, Waterloo, ON N2L 3G1, Canada}}

\author[2,3]{Sigrun Ortleb}
\affil[2]{Institute of Mathematics, University of Kassel, Untere K\"onigsstra\ss e 86, 34117 Kassel, Germany}
\affil[3]{Department of Mathematics, RWTH Aachen University, Schinkelstraße 2, 52062 Aachen,
Germany}
\author[2]{Salim Elghawi}

\author[1]{David C. Del Rey Fern{\'a}ndez}

\begin{document}
\maketitle

\abstract{We develop an unconditionally energy-stable tensor-product space-time discretization framework for the solution of a linear kinetic transport equation in one space dimension. The kinetic equation is a simplified model of radiative transfer formulated as a hyperbolic balance law in diffusive scaling for a particle distribution function of the independent variables space, time and velocity. Our numerical discretization is based on the well-known technique of micro-macro decomposition which results in a system of balance laws for equilibrium and non-equilibrium quantities and facilitates preservation of the asymptotic limit for vanishing scaling parameters at the discrete level. We prove fully discrete stability and asymptotic preservation for general spatial and temporal discretizations having the summation-by-parts property. A new provably energy-stable Dirichlet boundary treatment for the micro-macro decomposed system is developed based on the introduction of simultaneous approximation terms. Numerical results show convergence for smooth problems and demonstrate energy stability of the proposed boundary treatment.}

\begin{AMS}
  65M06, 
  65M12, 
  65M70,  
  65L04  
\end{AMS}

\begin{keywords}
  linear kinetic transport,
  diffusive scaling,
  micro-macro decomposition,
  space-time discretization,
  summation-by-parts,
  simultaneous approximation terms,
  energy stability,
  asymptotic preservation
\end{keywords}

\section{Introduction}

Particle dynamics govern a wide range of physical processes relevant to natural and engineering sciences, e.g. rarefied gas dynamics, neutron transport, radiative transfer or plasma physics. In this context, the evolution of large numbers of particles may be modeled at different scales. On the microscopic scale, the  motion and interaction of particles is described by classical mechanics based on Newton's laws of motion. On the macroscopic scale, hydrodynamic equations model the evolution of observable quantities such as density, velocity or temperature. Building a bridge between these scales, kinetic models replace the individual particle movement by the evolution of the statistical particle density distribution which now models the particle dynamics on an intermediate level between microscopic and macroscopic scales.

If the number of particles is extremely large, kinetic models can capture relevant microscopic phenomena at a significantly lower cost compared to microscopic particle models and with greater detail than macroscopic models.
On the other hand, in case of a vanishing ratio between mean free path of particles and characteristic length of the problem, characterized by a small Knudsen number $\eps$, one can consider the passage of the kinetic model towards the less expensive macroscopic model as the asymptotic limit.
Classical domain decomposition approaches as in \cite{DegondJin:2005} for kinetic equations in diffusive scaling or in \cite{Bourgat_etal:1994,TiwariKlar:1998} for the Boltzmann equations may switch between solving the full kinetic model and the macroscopic approximation depending on the regime. However, since the multiscale nature and thus the validity of the macroscopic model may vary locally in space and time, another favorable strategy is given by the construction of asymptotic-preserving~(AP) schemes \cite{CoronPerthame:1991,BennouneLemouMieussens:2008,LemouMieussens:2008,DimarcoPareschi:2013,BoscarinoPareschiRusso:2013,GambaJinLiu:2019} which aim for uniform stability with respect to the vanishing Knudsen number. More precisely, a numerical scheme is called AP, if it has the property that for fixed discretization parameters $\Delta t,\Delta x$, in the limit $\eps\rightarrow0$, the scheme is a consistent discretization of the macroscopic limit equation. Such AP schemes can thus be applied uniformly to the full kinetic model and correctly capture the asymptotic limit on the discrete level. This property is important since for instance, a naive combination of a conservative spatial discretization with an unconditionally stable implicit time integration scheme my fail to do so, as shown e.g. in~\cite{NaldiPareschi:2000}. A specific technique for the construction of AP schemes is a micro–macro decomposition introduced in \cite{LemouMieussens:2008}. This decomposition only uses basic properties of the collision operator such as conservation and equilibrium properties and is very general since it applies to different types of scaling, i.e. diffusive scaling for a consideration of the diffusive regime as well as hydrodynamic scaling employed for the examination of the fluid dynamic limit of the Boltzmann equation towards the compressible Euler equations.
In order to construct an AP scheme, a method-of-lines approach is usually taken in which the micro-macro decomposed kinetic model is first discretized in space by a suitable conservative scheme based for instance on finite volume methods or discontinuous Galerkin schemes \cite{JangLiQiuXiong14, JangLiQiuXiong15, PengChangQuiLi20, PengLi:2021}. Due to the stiffness of the problem near the asymptotic limit, the resulting system is then commonly treated using semi-implicit time discretization approaches e.g. based on operator splitting \cite{CoronPerthame:1991,JinPareschiToscani98} or term splitting \cite{NaldiPareschi:1998}. A contrasting approach is to first neglect space discretization and develop a semi-implicit time discretization which formally satisfies the AP property and to later introduce space discretization at the computational level \cite{BoscarinoPareschiRusso:2013,DimarcoPareschi:2013}.

In this work, we take a novel approach based on a simultaneous space-time discretization via summation-by-parts~(SBP) operators in space and time with specific structural properties aiming for stability and robustness of the overall scheme. As a starting point, we consider linear kinetic equations in diffusive scaling in micro-macro decomposition \cite{LemouMieussens:2008,JangLiQiuXiong15,PengChangQuiLi20,PengLi:2021}.

SBP schemes represent a general framework of structure-preserving discretizations that mimic the analytical concept of integration-by-parts in the discrete setting. Originally, discrete derivative spatial operators with an SBP property have been introduced into the class of finite difference schemes for hyperbolic problems~\cite{Kreiss_Scherer:74,Strand:94,CarpenterGottliebAbarbanel94,olsson1995_I,olsson1995_II}. Currently, the SBP framework has reached a mature state and allows to construct high-order accurate, conservative and stable numerical methods for various types of hyperbolic and parabolic PDEs, including variable coefficient equations, nonlinear hyperbolic conservation laws, advection-diffusion equations and the linearized compressible Navier-Stokes equations~\cite{Nordstroem_etal:09,SvardNordstrom14,FernandezHickenZingg14}. Extensions of the SBP methodology have been provided for tensor-product grids on curvilinear elements \cite{doi:10.1007/s10915-019-01011-3} as well as for the general multidimensional case~\cite{doi:10.1137/15M1038360} including simplex elements. 
Via $L^2$-energy estimates, the spatial SBP operators automatically yield stable schemes for periodic solutions of a broad class of linear equations. In addition, they have substantially profited from a combination with weakly enforced boundary conditions, most prominently by using simultaneous approximation terms (SATs) which were first developed in \cite{CarpenterGottliebAbarbanel94}.

SBP operators for the time domain were first studied in \cite{NordstromLundquist13} as a suitable approach to carry over semi-discrete energy-stability to the fully discrete level. The corresponding time discretization method is unconditionally stable for arbitrarily large time steps. Unconditional stability for periodic linear kinetic equations in the diffusion limit for small scaling parameters was also achieved in~\cite{PengLi:2021,ortleb2024unconditional} based on a discontinuous Galerkin discretization or upwind SBP operators in space combined with implicit-explicit~(IMEX) Runge-Kutta time integration with a high degree of implicitness within the IMEX strategy. However, in the kinetic regime with moderate values of the scaling parameter, stability requires the time step to scale with the grid length scale. Furthermore, the strategy in \cite{PengLi:2021} does not allow a transfer of unconditional stability to the case of Dirichlet boundary conditions. On the other hand, the discretization approach via space-time SBP operators proposed in this work is fully implicit, asymptotic preserving and unconditionally energy-stable also for Dirichlet conditions, which are weakly imposed using the SAT framework. In fact, for the micro-macro decomposition, a stable treatment of Dirichlet boundary conditions is less straightforward, since ingoing and outgoing characteristic directions are associated with linear combinations of the equilibrium and non-equilibrium quantities of the micro-macro system.

The paper is structured as follows. In Section~\ref{sec:continuous_problem}, the prototype linear kinetic equation and its micro-macro decomposition are introduced. The stability of the continuous linear system with periodic or Dirichlet boundary conditions is proven in Section~\ref{sec:stability_continuous}. In Section~\ref{sec:ST} we develop our space-time SBP discretization strategy starting with a single element, single time slab in Section~\ref{sec:single_element_single_slab}, and moving on to multi-element discretizations in space in Section~\ref{sec:multiel} and multiple time slabs in Section~\ref{sec:multislab}. We prove stability and asymptotic preservation for all of these variants of the space-time discretization where some of the more technical details of the stability proofs are given in Appendix~\ref{app:proofs}. Energy-stable Dirichlet boundary conditions are dealt with in Section~\ref{sec:DirichletBdry}. Both periodic boundary conditions and Dirichlet conditions are connected to ingoing and outgoing characteristics determined by the diagonalization of the assosciated linear hyperbolic system given in Appendix~\ref{sec:app_diag}. Numerical results are reported in Section~\ref{sec:numexp} showing convergence for smooth problems and demonstrating energy stability of the proposed boundary treatment. Conclusions and an outlook on future work are given in Section~\ref{sec:conclusions}.

\section{The continuous problem} \label{sec:continuous_problem}

In this work, we consider a prototype linear kinetic equation in diffusive scaling
\begin{align}\label{eq:kinetic_f}
 \eps \pde{t} f + v\pde{x} f = \frac{\sigma_s}{\eps}\left(\mean f-f\right) - \eps \sigma_a f\,,
\end{align}
which can be regarded as a simplified model of radiative transfer including the processes of propagation, absorption and scattering of a flow of photons in a given medium. The kinetic model~\eqref{eq:kinetic_f} describes the evolution of the quantity $f(x,v,t)$, which is the probability density function of particles depending on the spatial position $x\in\Omega_x$, the velocity $v\in\Omega_v$ in a bounded velocity space $\Omega_v$, and time $t\in\R^+_0$. Furthermore, $\sigma_s(x) > 0$ and $\sigma_a(x)\ge 0$ are the scattering and absorption coefficients and $\mean f = \int_{\Omega_v} f d\nu$ is the macroscopic density of particles, depending only on $x$ and $t$. The measure $\nu$ is problem-specific and chosen such that it is normalized in the sense of
$\mean 1 =  \int_{\Omega_v}  d\nu = 1$. Furthermore, for the velocity space $\Omega_v$ we additionally assume $\mean{v}=0$.
In particular, choosing the discrete set $\Omega_v = \{-1,1\}$ and $\mean f = \frac12\left(f|_{v=-1}+f|_{v=1}\right)$ yields the well-known telegraph equation, while for the one-group transport equation in slab geometry, we have $\Omega_v = [-1,1]$ and $\mean f = \frac12\int_{-1}^1 f d\nu$. The parameter $\eps$ denotes the dimensionless Knudsen number and by considering the diffusive scaling of \eqref{eq:kinetic_f}, we particularly focus on the long time behavior of the solution. For $\eps\rightarrow 0$, i.e. in the diffusion limit, the model formally converges to the variable coefficient diffusion-reaction equation for the macroscopic density $\rho = \mean f$ given by
\begin{align}\label{eq:heat}
    \pde{t} \rho = \mean{v^2} \pde{x} \left(\frac{1}{\sigma_s}\pde{x} \rho\right) - \sigma_a \rho.
\end{align}

In the micro-macro approach (see e.g. \cite{PengChangQuiLi20}), the particle distribution $f$ is orthogonally decomposed into $f=\rho +\eps g$, with macroscopic density given by $\rho=\mean{f}$ and non-equilibrium part denoted by $g=\frac1\eps(f-\rho) = \frac1\eps(f-\mean f)$. The resulting micro-macro decomposed system, which is analytically equivalent to the original kinetic model \eqref{eq:kinetic_f} can be found as follows (see \cite{LemouMieussens:2008}). First, we insert $f = \rho + \eps g$ into \eqref{eq:kinetic_f} to obtain
\begin{align}\label{eq:rho_deriv}
    \eps \pde{t} \rho + \eps^2 \pde{t} g + v \pde{x} \rho + \eps v \pde{x} g & = - \sigma_s g - \eps \sigma_a \rho - \eps^2 \sigma_a g.
\end{align}
Then, by integrating \eqref{eq:rho_deriv} over the velocity space $\Omega_v$, we arrive at 
\begin{align}
    \pde{t} \rho + \pde{x} \mean{vg} & = - \sigma_a \rho.
\end{align}
Second, we apply the orthogonal projection $\rone - \Pi $, where $(\rone - \Pi) \varphi = \varphi - \mean{\varphi}$ to \eqref{eq:rho_deriv}, which results in 
\begin{align}\label{eq:g_deriv}
    \eps^2 \pde{t} g + (\rone - \Pi) \eps v \pde{x} g + v\pde{x} \rho = - (\sigma_s + \eps^2 \sigma_a)g.
\end{align}
The resulting micro-macro decomposed system thus reads
\begin{subequations}\label{eq:eqns}
    \begin{align}
        \partial_t \rho + \partial_x \mean{vg} & = - \sigma_a \rho, \label{eq:rho} \\
        \partial_t g + \tfrac{1}{\varepsilon} v\partial_x g - \tfrac{1}{\varepsilon} \mean{v\partial_x g} + \tfrac{1}{\varepsilon^2} v \partial_x \rho & = - \left(\tfrac{\sigma_s}{\varepsilon^2} + \sigma_a \right) g. \label{eq:g}
    \end{align}
\end{subequations}
We take the spatial domain to be $\Omega_x = (x_L,x_R)$ and the temporal domain to be $t \in (0,\mathcal{T}]$ and supplement this problem by initial conditions with $L^2$-bounded data and either periodic or Dirichlet boundary conditions. For Dirichlet boundary conditions, the diagonalization in Section \ref{sec:app_diag} of the associated linear hyperbolic system obtained by neglecting the right hand side of \eqref{eq:eqns} demands left-hand boundary conditions for $v>0$ and right-hand boundary conditions for $v<0$. No boundary condition is necessary for $v=0$.

For the original kinetic equation \eqref{eq:kinetic_f} the corresponding boundary conditions are 
\begin{align*}
    f(x_L, v, t) &= f_L(v), \quad v>0,\\
    f(x_R, v, t) &= f_R(v), \quad v<0.
\end{align*}
For the micro-macro decomposed system \eqref{eq:eqns} this translates to the Dirichlet conditions
\begin{subequations}\label{eq:Dirichlet_continuous}
\begin{align}
    \rho(x_L,t) + \varepsilon g(x_L,v,t) &= f_L(v), \quad v>0,\\
    \rho(x_R,t) + \varepsilon g(x_R,v,t) &= f_R(v), \quad v<0.
\end{align}
\end{subequations}
Energy stability of the continous micro-macro system with either periodic or homogeneous Dirichlet boundary conditions with $f_L(v)=0, \ f_R(v)=0$ is proven in Section \ref{sec:stability_continuous}.

As stated in \cite{LemouMieussens:2008}, the system \eqref{eq:rho},\eqref{eq:g} is formally equivalent to the original model \eqref{eq:kinetic_f}. More precisely, Proposition 2.2 in \cite{LemouMieussens:2008} states that \begin{enumerate}
    \item if $f$ is a solution to the original model \eqref{eq:kinetic_f} with initial data $f_{t=0}=f_{init}$, then $\rho$ and $g$ constructed as above are solutions to \eqref{eq:rho},\eqref{eq:g} with associated initial data 
    \begin{equation}\label{eq:init_micromacro}
        \rho|_{t=0}=\mean{f_{init}}=\rho_{init},\quad g|_{t=0}=\frac1\varepsilon\left(f_{init}-\rho_{init}\right)=g_{init},
    \end{equation} and
    \item if $\left(\rho,g\right)$ is a solution to the micro-macro system \eqref{eq:rho},\eqref{eq:g} with initial data  $\rho|_{t=0}=\rho_{init}$ and $ g|_{t=0}=g_{init}$ satisfying $\mean{g_{init}}=0$, then it holds that $\mean{g}=0$ for all $t$ and $f=\rho+\varepsilon g$ is a solution of \eqref{eq:kinetic_f} with associated initial data $f|_{t=0}=\rho_{init}+\varepsilon g_{init}$.
\end{enumerate}

Since a proof of this proposition is simple it has not been included in \cite{LemouMieussens:2008}. However, the assertion $\mean{g}=0$ is crucial for stability of the continuous micro-macro system. In addition, the proof in the continuous case is instructive for the proof of the corresponding semidiscrete and discrete counterparts and therefore also for the stability proof of the numerical scheme. Hence, in the following section, the assertion $\mean{g}=0$ as well as stability of the continuous micro-macro system is proven before transferring the concept to the numerical scheme.

\subsection{Stability of the continuous micro-macro decomposed linear kinetic model} \label{sec:stability_continuous}

The general idea of structure-preserving numerical methods such as SBP schemes is to mimic the structural properties and the stability behavior of the continuous problem. Therefore, we first study the stability properties of above micro-macro decomposition in its continuous form~\eqref{eq:rho}-\eqref{eq:g}. The stability proof relies on the property $\mean{g}=0$ which is proven in Lemma \ref{lem:mean_g}. We have the following energy-stability result.

\begin{theorem}\label{thm:stability_continuous}
We consider solutions $\rho(x,t),\ g(x,v,t)$ of the micro-macro kinetic equations \eqref{eq:eqns}, supplemented either with periodic boundary conditions or with homogeneous Dirichlet boundary conditions~\eqref{eq:Dirichlet_continuous} where $f_L(v)=0,\ f_R(v)=0$. If the initial values $g(x,v,0)$ fulfill the property $\mean{g}\vert_{t=0}=0$, then the solution is energy-stable in the sense that
\begin{align}\label{eq:energy_estimate}
\begin{split}
    & \norm{\rho(\cdot, \mathcal{T})}^2_{L^2(\Omega_x)} + \eps^2 \vvvert g(\cdot, \mathcal{T}, v) \vvvert^2 \\
    \le & - 2 \int_0^\mathcal{T} \sigma_a \norm{\rho (\cdot, t)}^2_{L^2(\Omega_x)} + (\sigma_s + \eps^2 \sigma_a) \vvvert g(\cdot, t, v) \vvvert^2 \: \diffd t + \norm{\rho(\cdot, 0)}^2_{L^2(\Omega_x)} + \eps^2 \vvvert g(\cdot, 0, v) \vvvert^2,
    \end{split}
\end{align}
where we introduced the norm $\vvvert \varphi \vvvert \coloneqq \sqrt{\mean{\norm{\varphi}^2_{L^2(\Omega_x)}}}$ as in \cite{JangLiQiuXiong14}, which includes averaging in velocity space. In case of periodic boundary conditions, we have equality in \eqref{eq:energy_estimate}.
\end{theorem}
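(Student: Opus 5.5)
We would follow the standard energy method: multiply \eqref{eq:rho} by $2\rho$ and integrate over $\Omega_x$, multiply \eqref{eq:g} by $2\eps^2 g$ and integrate over $\Omega_x$ and average over $\Omega_v$, then add the two identities. The coupling terms should cancel after integration by parts in $x$, and the remaining boundary terms are handled by the boundary conditions.

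\textbf{Step 1: preservation of $\mean{g}=0$.} This is the content of Lemma~\ref{lem:mean_g}, and we invoke it directly. For completeness, averaging \eqref{eq:g} over $\Omega_v$ gives
\[
\partial_t\mean g+\tfrac1\eps\mean{v\partial_x g}-\tfrac1\eps\mean{v\partial_x g}+\tfrac1{\eps^2}\mean v\partial_x\rho=-\bigl(\tfrac{\sigma_s}{\eps^2}+\sigma_a\bigr)\mean g .
\]
The two middle terms cancel and $\mean v=0$, so $\mean g$ solves a linear ODE in $t$ pointwise in $x$. With zero initial data this forces $\mean g\equiv0$.

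\textbf{Step 2: the energy identity.} The time-derivative terms give $\frac{d}{dt}\bigl(\norm{\rho}^2+\eps^2\vvvert g\vvvert^2\bigr)$. The source terms give $-2\sigma_a\norm\rho^2-2(\sigma_s+\eps^2\sigma_a)\vvvert g\vvvert^2$; for $x$-dependent coefficients these are understood as weighted integrals, as the statement intends.

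\textbf{Step 3: the transport terms.} The term $-2\eps\int\mean{g\mean{v\partial_xg}}\,dx=-2\eps\int\mean g\,\mean{v\partial_x g}\,dx$ vanishes by Step 1. This is exactly where $\mean g=0$ is essential. The term $2\eps\int\mean{vg\partial_xg}\,dx$ equals $\eps[\mean{vg^2}]_{x_L}^{x_R}$. The coupling terms are $2\int\rho\,\partial_x\mean{vg}\,dx+2\int\mean{vg}\,\partial_x\rho\,dx=2[\rho\mean{vg}]_{x_L}^{x_R}$. So the total boundary contribution to the time derivative of the energy is
\[
-\bigl[\mean{v(2\rho g\eps^{-1}\cdot\eps+\eps g^2)}\bigr]_{x_L}^{x_R}.
\]
Using $\mean v=0$ to add $\mean{v\rho^2}=0$, this becomes $-\bigl[\mean{v(\rho+\eps g)^2}\bigr]_{x_L}^{x_R}=-\bigl[\mean{vf^2}\bigr]_{x_L}^{x_R}$.

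\textbf{Step 4: boundary terms.} In the periodic case these terms vanish, which gives equality in \eqref{eq:energy_estimate} after integrating in time from $0$ to $\mathcal T$. In the homogeneous Dirichlet case, \eqref{eq:Dirichlet_continuous} gives $f=0$ at $x_R$ for $v<0$ and at $x_L$ for $v>0$. The remaining contribution is therefore
\[
-\mean{v f^2\mathbbm 1_{v>0}}(x_R)+\mean{vf^2\mathbbm 1_{v<0}}(x_L)\le0,
\]
which yields the inequality.

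The main obstacle is Step 3. One must recognize that the boundary terms combine into a perfect square in $f=\rho+\eps g$, and this requires adding the zero term $\mean v\rho^2$. One must also keep the projection term from Step 1 from contributing.
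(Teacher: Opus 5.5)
Your proof is correct and uses essentially the same energy argument as the paper. The only difference is cosmetic: you fold the boundary flux into $\frac{1}{\eps}\mean{v(\rho+\eps g)^2}$ using $\mean{v}\rho^2=0$ before imposing the boundary conditions, whereas the paper first substitutes $\rho=-\eps g$ on the inflow velocities and then uses $\mean{v^-}=-\mean{v^+}$ to complete the square on the outflow side.

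There is one small slip in Step 3. Since $2\rho\mean{vg}+\eps\mean{vg^2}=\frac{1}{\eps}\mean{v(\rho+\eps g)^2}$, your boundary contribution is missing a factor $\frac{1}{\eps}$. This does not affect the sign argument because $\eps>0$.
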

\begin{proof}
To prove that the problem \eqref{eq:eqns} is stable, we use the energy method which consists in multiplying the macro equation \eqref{eq:rho} by $\rho$ and the micro equation \eqref{eq:g} by $\varepsilon^2 g$ and integrating both equations over $\Omega_x$ to obtain
\begin{subequations}
    \begin{align}
        \tfrac{1}{2}\partial_t \norm{\rho}^2_{L^2(\Omega_x)} + \int_{\Omega_x} \rho \partial_x \mean{vg} \: \diffd x & = - \sigma_a \norm{\rho}^2_{L^2(\Omega_x)}, \\
        \tfrac{\varepsilon^2}{2} \partial_t \norm{g}^2_{L^2(\Omega_x)} + \int_{\Omega_x} \left(\tfrac{\varepsilon}{2} v\partial_x g^2 - \varepsilon g \mean{v\partial_x g} + vg \partial_x \rho\right)  \diffd x & = - (\sigma_s + \varepsilon^2 \sigma_a) \norm{g}^2_{L^(\Omega_x)}.
    \end{align}
\end{subequations}
Then, by averaging the second of the above two equations in velocity space and adding both equations, we arrive at
\begin{align*}
    \tfrac{1}{2} \partial_t \left( \norm{\rho}^2_{L^2(\Omega_x)}  + \varepsilon^2 \vvvert g \vvvert^2 \right) &+ \int_{\Omega_x} \left(\rho \partial_x \mean{vg} + \mean{v g} \partial_x \rho \right) \diffd x \\
    & + \int_{\Omega_x} \left(\tfrac{\varepsilon}{2} \mean{v \partial_x g^2} - \varepsilon\mean{g}\mean{v \partial_x g}  \right) \diffd x = -\sigma_a \norm{\rho}^2_{L^2(\Omega_x)} - (\sigma_s + \varepsilon^2 \sigma_a)\vvvert g\vvvert^2.
\end{align*}
By using the fact that $\mean{g} = 0$ as shown in the below Lemma \ref{lem:mean_g}, the above is reduced to
\begin{align}\label{eq:energy_decay_unfinished}
    \tfrac{1}{2} \partial_t \left( \norm{\rho}^2_{L^2(\Omega_x)}  + \varepsilon^2 \vvvert g\vvvert^2 \right) + \left(\rho \mean{vg}\right) \vert_{\partial \Omega_x} + \tfrac{\varepsilon}{2} \mean{v g^2}\vert_{\partial \Omega_x} = -\sigma_a \norm{\rho}^2_{L^2(\Omega_x)} - (\sigma_s + \varepsilon^2 \sigma_a)\vvvert g\vvvert^2.
\end{align}
In the case of periodic boundary conditions, we directly arrive at
\begin{align}\label{eq:energy_decay_in_time}
    \tfrac{1}{2} \partial_t \left( \norm{\rho}^2_{L^2(\Omega_x)}  + \varepsilon^2 \vvvert g \vvvert^2 \right) = -\sigma_a \norm{\rho}^2_{L^2(\Omega_x)} - (\sigma_s + \varepsilon^2 \sigma_a)\vvvert g\vvvert^2.
\end{align}
In the case of homogeneous Dirichlet boundary conditions, the boundary terms in \eqref{eq:energy_decay_unfinished} do not cancel out. We have
\begin{align*}
\left(\rho \mean{vg}\right) \vert_{\partial \Omega_x} + \tfrac{\varepsilon}{2} \mean{v g^2}\vert_{\partial \Omega_x} = \rho(b,t)\mean{vg(b,v,t)}-\rho(a,t)\mean{vg(a,v,t)} +\frac\varepsilon2\mean{vg^2(b,v,t)} - \frac\varepsilon2\mean{vg^2(a,v,t)}.
\end{align*}
Rewriting the boundary conditions as $\rho(b,t)=-\varepsilon g(b,v,t)$ for $v<0$ and $\rho(a,t)=-\varepsilon g(a,v,t)$ for $v>0$ and splitting the velocity space into negative and positive contributions $v^-=\min(v,0)\le 0$ and $v^+=\max(v,0)\ge0$ with $v^-+v^+=v$, we obtain
\begin{align*}
\left(\rho \mean{vg}\right) \vert_{\partial \Omega_x} + \tfrac{\varepsilon}{2} \mean{v g^2}\vert_{\partial \Omega_x} &= \rho(b,t)\mean{v^+g(b,v,t)}-\rho(a,t)\mean{v^-g(a,v,t)}\\
& +\frac\varepsilon2\mean{v^+g^2(b,v,t)} - \frac\varepsilon2\mean{v^-g^2(a,v,t)} -\frac{1}{2\varepsilon}\mean{v^-}\rho^2(b,v,t) + \frac{1}{2\varepsilon}\mean{v^+}\rho^2(a,v,t).
\end{align*}
Using $\mean{v^-}=-\mean{v^+}$ which is directly obtained from $\mean{v}=0$, the last two terms on the right-hand side of the above equation are given by $\frac{1}{2\varepsilon}\mean{v^+}\rho^2(b,v,t) - \frac{1}{2\varepsilon}\mean{v^-}\rho^2(a,v,t)$. This yields
\begin{align*}
\left(\rho \mean{vg}\right) \vert_{\partial \Omega_x} + \tfrac{\varepsilon}{2} \mean{v g^2}\vert_{\partial \Omega_x} &= \frac{1}{2\varepsilon}\mean{v^+(\rho +\varepsilon g)^2(b,v,t)} - \frac{1}{2\varepsilon}\mean{v^-(\rho+\varepsilon g)^2(a,v,t)} \ge 0.
\end{align*}
The energy decay in time is then given by 
\begin{align}\label{eq:energy_decay_in_time_D}
\begin{split}
    \tfrac{1}{2} \partial_t \left( \norm{\rho}^2_{L^2(\Omega_x)}  + \varepsilon^2 \vvvert g \vvvert^2 \right) = &-\sigma_a \norm{\rho}^2_{L^2(\Omega_x)} - (\sigma_s + \varepsilon^2 \sigma_a)\vvvert g\vvvert^2 \\
    &- \frac{1}{2\varepsilon}\mean{v^+(\rho +\varepsilon g)^2(b,v,t)} + \frac{1}{2\varepsilon}\mean{v^-(\rho+\varepsilon g)^2(a,v,t)} \le 0.
    \end{split}
\end{align}
For either periodic boundaries or for homogeneous Dirichlet conditions, the energy decay in time is hence determined by equations \eqref{eq:energy_decay_in_time} and \eqref{eq:energy_decay_in_time_D}, respectively. Integrating these equations in time leads to
\begin{align*}
    & \norm{\rho(\cdot, \mathcal{T})}^2_{L^2(\Omega_x)} + \eps^2 \vvvert g(\cdot, \mathcal{T}, v) \vvvert^2 \\
    \le & - 2 \int_0^\mathcal{T} \sigma_a \norm{\rho (\cdot, t)}^2_{L^2(\Omega_x)} + (\sigma_s + \eps^2 \sigma_a) \vvvert g(\cdot, t, v) \vvvert^2 \: \diffd t + \norm{\rho(\cdot, 0)}^2_{L^2(\Omega_x)} + \eps^2 \vvvert g(\cdot, 0, v) \vvvert^2,
\end{align*}
with equality in case of periodic boundary conditions. This is the assertion of the Theorem.
\end{proof}

The following Lemma provides the necessary assertion that an initial condition with the property $\mean{g}\vert_{t=0}=0$ is sufficient to achieve $\mean{g}=0$ for all $t$ for the solutions to the micro-macro system.
\begin{lemma}\label{lem:mean_g}
If the function $g$ solves the equation \eqref{eq:g} and the corresponding initial condition satisfies $\mean{g}\vert_{t=0}=0$, then $g$ satisfies $\mean{g} = 0$ for all $t\in[0,\mathcal{T}]$.
\end{lemma}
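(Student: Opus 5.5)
The plan is to apply the velocity average $\mean{\cdot}$ to the micro equation \eqref{eq:g} and show that $\mean{g}$ satisfies a closed, homogeneous linear ODE in time, pointwise in $x$. Averaging is linear and commutes with $\partial_t$ and $\partial_x$, since $\Omega_v$ and $\nu$ are independent of $x,t$ and $\sigma_s,\sigma_a$ depend only on $x$. This gives
\begin{align*}
\partial_t \mean{g} + \tfrac{1}{\eps}\mean{v\partial_x g} - \tfrac{1}{\eps}\mean{1}\mean{v\partial_x g} + \tfrac{1}{\eps^2}\mean{v}\partial_x\rho = -\left(\tfrac{\sigma_s}{\eps^2}+\sigma_a\right)\mean{g}.
\end{align*}

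The cancellations use only the two normalization properties assumed in Section~\ref{sec:continuous_problem}. Since $\mean{1}=1$, the two transport terms cancel; this is the projection property $\mean{(\rone-\Pi)\varphi}=0$. Since $\mean{v}=0$, the term involving $\partial_x\rho$ vanishes, because $\rho$ does not depend on $v$. Setting $m(x,t)\coloneqq\mean{g}(x,t)$, we are left with
\begin{align*}
\partial_t m = -\left(\tfrac{\sigma_s(x)}{\eps^2}+\sigma_a(x)\right) m,
\end{align*}
a linear ODE in $t$ for each fixed $x$, with no spatial derivatives and no coupling to $\rho$.

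Its solution is $m(x,t)=m(x,0)\exp\!\bigl(-(\sigma_s/\eps^2+\sigma_a)t\bigr)$. The hypothesis $m(\cdot,0)=0$ then gives $m\equiv0$ on $[0,\mathcal{T}]$. Uniqueness can alternatively be seen from an energy argument: multiplying by $m$ and integrating over $\Omega_x$ gives $\tfrac12\partial_t\norm{m}^2_{L^2(\Omega_x)}\le0$, because $\sigma_s>0$ and $\sigma_a\ge0$. Since the equation for $m$ has no spatial derivatives, boundary conditions (periodic or Dirichlet) play no role.

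The main point requiring care is justifying the interchange of $\mean{\cdot}$ with the derivatives. For the discrete velocity set of the telegraph equation this is trivial. For $\Omega_v=[-1,1]$ it holds for sufficiently regular (e.g.\ classical) solutions by differentiation under the integral, since the velocity domain is bounded. Otherwise the identity can be read in the weak/distributional sense, by testing against functions independent of $v$.
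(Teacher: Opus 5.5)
Your proposal is correct and follows the paper's proof essentially verbatim. You average \eqref{eq:g} in velocity, cancel the transport terms via $\mean{1}=1$ and drop the $\rho$-term via $\mean{v}=0$, then solve the resulting pointwise-in-$x$ linear ODE $\partial_t\mean{g} = -(\sigma_s/\eps^2+\sigma_a)\mean{g}$ with zero initial data. Your extra remarks on the energy-based uniqueness argument and on justifying the interchange of averaging and differentiation are sound refinements that the paper leaves implicit.
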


\begin{proof}
Averaging equation \eqref{eq:g} in velocity space and using yields
\begin{align*}
\partial_t \mean{g} + \underbrace{\tfrac{1}{\varepsilon} \mean{v\partial_x g} - \tfrac{1}{\varepsilon} \mean{v\partial_x g}}_{=0} + \tfrac{1}{\varepsilon^2} \mean{v} \partial_x \rho  & = - \left( \tfrac{\sigma_s}{\varepsilon^2} + \sigma_a\right) \mean{g},
\end{align*}
and with the normalization assumption $\mean{v}=0$, we obtain
\begin{align*}
\partial_t \mean{g} & = - \left( \tfrac{\sigma_s}{\varepsilon^2} + \sigma_a \right)\mean{g}.
\end{align*}
The average $\mean{g}$ in velocity space of the non-equilibrium function $g$ is a function of $x$ and $t$. For fixed $x$, we obtain an ordinary differential equation with the general solution $\mean{g}(x,t) = C(x)e^{- \left( \tfrac{\sigma_s}{\varepsilon^2} + \sigma_a \right) t}$. Furthermore, since the initial condition for $\mean{g}$ is assumed to be $\mean{g}\vert_{t=0} = 0$ for all $x\in\Omega_x$, we obtain $C(x)\equiv0$ and thus $\mean{g} \equiv 0$ for all time.
\end{proof}

\subsection{Discretization of a continuous velocity space}
In case of a continuous velocity space $\Omega_v=[-1,1]$, we discretize the velocity space by a numerical quadrature. Using the velocity nodes $v_1,\ldots, v_{n_v}$ and corresponding quadrature weights  $\omega_1,\ldots, \omega_{n_v}$, averages in velocity space are approximated as
\[\mean {f}\approx \sum_{k=1}^{n_v}\omega_kf_k = \mean {f}_h,\] for instance, $\mean{ vg(x,v,t)}_h=\sum_{k=1}^{n_v}\omega_kv_k g(x,v_k,t)\approx\mean{ vg(x,v,t)}$ discretizes the transport flux in the macro equation~\eqref{eq:rho}. In particular, since any reasonable quadrature rule approximates integrals of constant functions exactly, from the assumptions on the velocity space and the initial conditions of $g$, it holds that
\begin{align*}
\mean{v}_h &= \mean{v}=0,\\
\mean{g}_h(x,0) &= \mean{g}(x,0)=0.
\end{align*}
The above equalities directly extend both the assertion of Lemma \ref{lem:mean_g} and the resulting stability of the micro-macro decomposed linear kinetic equation to the case of a discretized velocity space, i.e. for the discrete average $\mean{\cdot}_h$ replacing the continuous average $\mean{\cdot}$.

\section{Space-time SBP discretization of the linear kinetic model}\label{sec:ST}

The aim is to formulate stable and asymptotic preserving space-time approximations of the equations~\eqref{eq:eqns}. We use SBP operators both in space and in time for this purpose and we consider either the discrete velocity case or the case of a discretized velocity space.

First, we discretize the spatial domain $\Omega_x$ into a set of $n_x+1$ grid points $x_i$ and let the vector of grid nodes be denoted by $\x = \begin{bmatrix} x_0, x_1, \ldots, x_{n_x} \end{bmatrix}$. Furthermore, we use the convention $\x^k = \begin{bmatrix} x_0^k, x_1^k, \ldots, x_{n_x}^k \end{bmatrix}$ for the representation of a monomial function on the grid. An SBP operator approximating the first derivative can be defined as follows (see e.g. \cite{FernandezHickenZingg14}).

\begin{definition}[Def. 1 in \cite{FernandezHickenZingg14}]\label{def:SBP}
    The matrix $\bD_x$ is a degree $p$ SBP approximation of $\tfrac{\diffd}{\diffd x}$ on $\{x_i\}_{i=0}^n$ if
    \begin{itemize}
        \item $\bD_x \x^k = k\x^{k-1}$ for all $k \in [0,p]$,
        \item $\bD_x = \bH^{-1}_x \bQ_x$ where $\bH$ is symmetric, positive definite, i.e., $\bH_x = \bH^\top_x \succ 0$,
        \item $\bQ_x + \bQ^\top_x = \bE_x = \bt_\sR \bt_\sR^\top - \bt_\sL \bt_\sL^\top = \text{diag}(-1, 0, \ldots, 0,1)$, where $\bt^\top_\sR = \begin{bmatrix} 0, \ldots, 0, 1 \end{bmatrix}$ and $\bt_\sL^\top = \begin{bmatrix} 1, 0, \ldots, 0 \end{bmatrix}$ and $\bt_\sR, \bt_\sL \in \mathbb{R}^{(n_x+1)\times 1}$.
    \end{itemize}
\end{definition}
The decomposition $\bQ_x = \bS_x + \tfrac{1}{2} \bE_x$, where $\bS_x$ is a skew-symmetric matrix, is sometimes used (see e.g. \cite{HickenFernandezZingg16, CreanHickenFernandezZinggCarpenter18}). We will leverage this decomposition below. An SBP operator $\bD_t$ approximating $\frac{\diffd}{\diffd t}$ on a temporal domain $T_h = [0, \mathcal{T}]$ discretized into a set of $n_t+1$ grid points can be defined analogously. In this case, we have $\bQ_t + \bQ_t^\top = \bE_t = \bt_\sT \bt_\sT^\top - \bt_\sB \bt_\sB^\top = \text{diag}(-1, 0, \ldots, 0, 1)$ and $\bt_\sT,\bt_\sB \in \mathbb{R}^{(n_t+1)\times 1}$. In this work, we exclusively consider SBP operators with diagonal norm matrices, $\bH_x, \bH_t$.

\subsection{Single element, single slab space-time schemes}\label{sec:single_element_single_slab}

In this section, we assume we only have one element in the spatial direction and consider only one time slab in the temporal direction, as depicted in figure \ref{fig:singlesingle}. 
\begin{figure}[h!]
    \centering
    \includegraphics[width=0.7\linewidth]{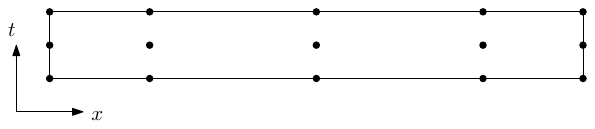}
    \caption{Example grid with one element in the spatial direction and one slab in the temporal direction.}
    \label{fig:singlesingle}
\end{figure}

Then, to extend the either purely spatial or purely temporal SBP operators to the full space-time domain, we use Kronecker products. Let $\Ix$ and $\It$ be the $(n_x+1) \times (n_x+1)$ and $(n_t+1) \times (n_t+1)$ identity matrices, respectively. We define
\begin{align*}
    \D_t & = \bD_t \otimes \Ix, \hspace{2em} && \D_x = \It \otimes \bD_x, \hspace{2em} && \H\phantom{_t} = \bH_t \otimes \bH_x, \\
    \Q_t & = \bQ_t \otimes \bH_x, \hspace{2em} && \Q_x = \bH_t \otimes \bQ_x,  && \H_t = \bH_t \otimes \Ix, \hspace{2em} && \H_x = \It \otimes \bH_x, \\
    \t_\sR & = \It \otimes \bt_\sR, \hspace{3.5em} && \hspace{.5em} \t_\sL = \It \otimes \bt_\sL, \hspace{1.7em} && \t_\sB\phantom{_t} = \bt_\sB \otimes \Ix, \hspace{2em} && \t_\sT\phantom{_x} = \bt_\sT \otimes \Ix.
\end{align*}
We order the solution vectors in the following way
\begin{align*}
    \ro^\top = \begin{bmatrix}
        \ro_0^\top, \ro_1^\top, \ldots, \ro_{n_t}^\top
    \end{bmatrix},
\end{align*}
where $\ro_i^\top = \begin{bmatrix} \ro_{i0}, \ro_{i1}, \ldots, \ro_{in_x} \end{bmatrix}$ contains the approximation of $\rho$ in all the spatial nodes at time-step $i$.

We use SATs to impose either periodic boundary conditions or Dirichlet boundary conditions in space. In addition, SATs in time are used to impose the initial condition. SATs have been used for a long time to impose different kinds of boundary conditions weakly in a stable manner (see \cite{CarpenterGottliebAbarbanel94}, and the review papers \cite{FernandezHickenZingg14, SvardNordstrom14} and the references therein). Lately, they have also been shown to yield stable impositions of initial conditions (see e.g. \cite{LundquistNordstrom14, NordstromLundquist13}). 

In the presentation of the scheme and in the investigation of its stability properties, we will first focus on the case of periodic boundary conditions in space. Energy-stable Dirichlet boundary conditions are then treated in Section \ref{sec:DirichletBdry}.

Also in case of a discretized velocity space, we denote the average in velocity space by $\mean{\cdot}$ instead of $\mean{\cdot}_h$ to simplify notation. Using the vectors of nodal values in space and time
$\brho$ and $\mb{g}_k$, with $k=1,\ldots n_v$ denoting the velocity nodes, this means a short notation of e.g. $\mean{v\mb{g}}=\sum_{k=1}^{n_v}w_kv_k\mb{g}_k$ and $\mean{v \D_x\mb{g}}=\sum_{k=1}^{n_v} w_k v_k \D_x \mb{g}_k$.

The space-time SBP scheme discretizing the micro-macro decomposed kinetic equations \eqref{eq:rho}-\eqref{eq:g} with periodic boundary conditions now reads
\begin{subequations}
    \begin{align}
        \D_t \ro + \D_x \mean{v\g} = & - \sigma_a \ro +   \SAT^{\scriptscriptstyle \ro}_{per} + \SAT_{\scriptscriptstyle \rho,0}, \label{eq:scheme_rho_SATper}\\
        \D_t \mb{g}_k + \tfrac{v_k}{\eps} \D_x \mb{g}_k - \tfrac{1}{\eps} \mean{v\D_x \g} + \tfrac{v_k}{\eps^2} \D_x \ro = & - \left( \tfrac{\sigma_s}{\eps^2} + \sigma_a \right) \mb{g}_k + \SAT^{\scriptscriptstyle \g_k}_{per} + \SAT_{\scriptscriptstyle g_k,0}, \qquad k=1,\ldots,n_v, \label{scheme_g_SATper}
    \end{align}
\end{subequations}
where the SATs are given by
\begin{subequations}
    \begin{align}
    \SAT^{\scriptscriptstyle \ro}_{per} &= \tfrac{1}{2} \H_x^{-1} \left( \t_\sR(\t_\sR^\top - \t_\sL^\top) - \t_\sL(\t_\sL^\top - \t_\sR^\top) \right) \mean{v\g}, \label{eq:SAT_xrho} \\
    \SAT^{\scriptscriptstyle \g_k}_{per} &= \tfrac{1}{2\eps} \H_x^{-1} \left( \t_\sR(\t_\sR^\top - \t_\sL^\top) - \t_\sL(\t_\sL^\top - \t_\sR^\top) \right) \left( \tfrac{v_k}{\eps} \ro + v_k\g_k - \mean{v\g} \right), \label{eq:SAT_xg} \\
        \SAT_{\scriptscriptstyle \rho,0} & = -\H_t^{-1} \t_\sB \t_\sB^\top (\ro - \ro(0)), \label{eq:SAT_trho}\\
        \SAT_{\scriptscriptstyle g_k,0} & = -\H_t^{-1} \t_\sB \t_\sB^\top (\g_k - \g_k(0)). \label{eq:SAT_tg}
    \end{align}
\end{subequations}
The space-time norm matrices appearing in the above SATs are given by 
\[\H_x^{-1} = \It \otimes \bH_x^{-1},\ \H_t^{-1} = \bH_t^{-1} \otimes \Ix,\] and $\ro(0)$ and $\mb{g}_k(0),\ k=1,\ldots,n_v$ denote the vector representations of the initial data.

The above periodic SATs \eqref{eq:SAT_xrho}-\eqref{eq:SAT_xg} may be explicitly derived by a diagonalization of the linear hyperbolic system associated with the micro-macro system. The corresponding technical derivation is given in Appendix \ref{sec:app_diag}.
Furthermore, the spatial SATs \eqref{eq:SAT_xrho}-\eqref{eq:SAT_xg} imposing the periodic boundary conditions can be incorporated into the SBP operators as follows.
\begin{align*}
\D_x -\tfrac{1}{2} \H_x^{-1} \left( \t_\sR(\t_\sR^\top - \t_\sL^\top) - \t_\sL(\t_\sL^\top - \t_\sR^\top) \right)
    = & (\It \otimes \bH_x^{-1}\bQ_x) - \tfrac12 (\It \otimes \bH_x^{-1}) \left( \t_\sR(\t_\sR^\top - \t_\sL^\top) - \t_\sL(\t_\sL^\top - \t_\sR^\top) \right) \\
    = & (\It \otimes \bH_x^{-1}) \left( \S_x + \tfrac12 \E_x - \tfrac12 \t_\sR \t_\sR^\top + \tfrac12 \t_\sR \t_\sL^\top + \tfrac12 \t_\sL \t_\sL^\top - \tfrac12 \t_\sL \t_\sR^\top \right) \\
    = & (\It \otimes \bH_x^{-1}) \left( \S_x - \tfrac12 (\t_\sL \t_\sR^\top - \t_\sR \t_\sL^\top) \right) = \H_x^{-1} \tQ_x = \tD_x.
\end{align*}
The resulting operator, $\tQ_x$, is fully skew-symmetric:
\begin{align*}
    \tQ_x + \tQ_x^\top & = \S_x - \tfrac{1}{2} \left( \t_\sL \t_\sR^\top - \t_\sR \t_\sL^\top \right) + \S_x^\top - \tfrac{1}{2} \left( \t_\sL \t_\sR^\top - \t_\sR \t_\sL^\top \right)^\top, \\
        & = \S_x + \S_x^\top - \tfrac{1}{2} \left( \t_\sL \t_\sR^\top - \t_\sR \t_\sL^\top \right) - \tfrac{1}{2} \left( \t_\sR \t_\sL^\top - \t_\sL \t_\sR^\top \right) \equiv 0. 
\end{align*}
The scheme can consequently be written as
\begin{subequations}\label{eq:scheme_single}
    \begin{align}
        \D_t \ro + \tD_x \mean{v\g} & = - \sigma_a \ro  + \SAT_{\scriptscriptstyle \rho,0}, \label{eq:scheme_rho}\\
        \D_t \g_k + \tfrac{v_k}{\varepsilon} \tD_x \g_k - \tfrac{1}{\varepsilon} \mean{v\tD_x\g} + \tfrac{v_k}{\varepsilon^2} \tD_x \ro & = -\left(\tfrac{\sigma_s}{\varepsilon^2} + \sigma_a\right) \g_k +\SAT_{\scriptscriptstyle \g_k,0}, \qquad k=1,\ldots, n_v.\label{eq:scheme_gk}
    \end{align}
\end{subequations}

The aim is to prove that the scheme is stable, and also that it is asymptotic preserving. To this end, we need the following theorem.

\begin{theorem}\label{thm:disc_g}
    The discrete function $\g$ satisfies $\mean{\g} = 0$.
\end{theorem}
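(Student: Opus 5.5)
We mimic the proof of Lemma~\ref{lem:mean_g} at the fully discrete level. Form the discrete velocity average of \eqref{eq:scheme_gk}: multiply the $k$-th equation by $w_k$ and sum over $k=1,\ldots,n_v$. All space-time operators ($\D_t$, $\tD_x$, $\H_t^{-1}\t_\sB\t_\sB^\top$) act identically on each $\g_k$ and are independent of $k$, so they commute with the average. This gives
\begin{align*}
\D_t\mean{\g} + \tfrac{1}{\eps}\mean{v\tD_x\g} - \tfrac{1}{\eps}\mean{1}\mean{v\tD_x\g} + \tfrac{\mean{v}}{\eps^2}\tD_x\ro = -\left(\tfrac{\sigma_s}{\eps^2}+\sigma_a\right)\mean{\g} - \H_t^{-1}\t_\sB\t_\sB^\top\left(\mean{\g}-\mean{\g(0)}\right).
\end{align*}
The quadrature rule is exact for constants, so $\mean{1}=1$ and the second and third terms cancel. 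Also $\mean{v}=0$ removes the $\ro$ coupling, and $\mean{\g(0)}=0$ by the assumption on the initial data. If $\sigma_s,\sigma_a$ vary in space, they appear as diagonal matrices, which again commute with the velocity average.

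Writing $\mb{m}=\mean{\g}$, we obtain the homogeneous linear system
\begin{align*}
\mathsf{A}\,\mb{m}=\mb{0},\qquad \mathsf{A}=\D_t+\H_t^{-1}\t_\sB\t_\sB^\top+\Sigma,\qquad \Sigma=\tfrac{\sigma_s}{\eps^2}+\sigma_a\succ0 .
\end{align*}
It remains to show that $\mathsf{A}$ is nonsingular, so that $\mb{m}=\mb{0}$. This is the main step, and we will prove it by an energy argument. Take $\mb{m}^\top\H$ times the system, with $\H=\bH_t\otimes\bH_x$ diagonal. Then
\[
\H\D_t=\bQ_t\otimes\bH_x,
\]
and by the SBP property
\[
\mb{m}^\top(\bQ_t\otimes\bH_x)\mb{m}=\tfrac12\mb{m}^\top(\bE_t\otimes\bH_x)\mb{m}
=\tfrac12\norm{\t_\sT^\top\mb{m}}^2_{\bH_x}-\tfrac12\norm{\t_\sB^\top\mb{m}}^2_{\bH_x}.
\]
The initial SAT contributes
\[
\mb{m}^\top\H\H_t^{-1}\t_\sB\t_\sB^\top\mb{m}=\norm{\t_\sB^\top\mb{m}}^2_{\bH_x}.
\]
Adding these, we arrive at
\[
\tfrac12\norm{\t_\sT^\top\mb{m}}^2_{\bH_x}+\tfrac12\norm{\t_\sB^\top\mb{m}}^2_{\bH_x}+\mb{m}^\top\H\Sigma\mb{m}=0 .
\]
The matrix $\H\Sigma$ is diagonal and positive definite, since $\H\succ0$ and $\sigma_s>0$. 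Every term is therefore nonnegative, and the last one vanishes only for $\mb{m}=\mb{0}$. Hence $\mean{\g}=0$. The argument even works without the $\Sigma$ term, using the standard invertibility of SAT-augmented SBP time operators, but positivity of $\Sigma$ makes it immediate.

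The only delicate point is the commutation of the velocity average with the operators. It requires that the same spatial SAT/periodic operator $\tD_x$ is used for every $k$; this holds by construction in \eqref{eq:scheme_single}.
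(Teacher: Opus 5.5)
Your proof is correct. It reaches the same homogeneous system as the paper but closes it differently.

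The paper writes the averaged equation as an eigenvalue problem, $\hat{\D}_t\mean{\g}=-\left(\tfrac{\sigma_s}{\eps^2}+\sigma_a\right)\mean{\g}$. It then invokes the Nordstr\"om--Lundquist lemma (eigenvalues of $B^{-1}A$ with positive semi-definite symmetric part of $A$ have nonnegative real part). Together with the observation that the symmetric part of $\Q_t+\t_\sB\t_\sB^\top$ is $\tfrac12(\t_\sT\t_\sT^\top+\t_\sB\t_\sB^\top)\succeq 0$, this shows the negative number $-(\sigma_s/\eps^2+\sigma_a)$ cannot be an eigenvalue.

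You instead test the system with $\mb{m}^\top\H$ and apply the same SBP identity directly, keeping the zeroth-order term on the coercive side. This is essentially the proof of that lemma carried out inline.

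What your route buys:
\begin{itemize}
\item It is self-contained and needs no complex eigenvectors.
\item More substantively, it works verbatim when $\sigma_s(x)$ and $\sigma_a(x)$ vary in space, since $\Sigma$ is then a positive diagonal matrix commuting with the diagonal $\H$. The paper's eigenvalue phrasing tacitly treats $\sigma_s/\eps^2+\sigma_a$ as a scalar, although the model, and the numerical example with $\sigma_s=1+100x^2$, have variable coefficients.
\item Your bookkeeping, $\H\D_t=\bQ_t\otimes\bH_x$ and $\H\H_t^{-1}\t_\sB\t_\sB^\top=\bt_\sB\bt_\sB^\top\otimes\bH_x$, is cleaner than the paper's $\H_t^{-1}(\Q_t+\t_\sB\t_\sB^\top)$, which mixes $\H$ and $\H_t$.
\end{itemize}
The paper's formulation, in turn, makes the link to the existing invertibility theory for SBP-SAT time discretizations explicit.

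One caveat concerns your closing aside, though it does not affect the proof. Without $\Sigma$, the energy identity only yields $\t_\sT^\top\mb{m}=\t_\sB^\top\mb{m}=0$. In that case $\bQ_t+\bt_\sB\bt_\sB^\top$ is nonsingular exactly when $\ker\bD_t$ consists only of constants (nullspace consistency), which Definition~\ref{def:SBP} does not guarantee. Since $\sigma_s>0$, your argument never needs this.
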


Before we give the proof, we state a necessary lemma.

\begin{lemma}[Lemma 1 in \cite{NordstromLundquist13}]\label{lem:EV}
    Let $B$ be a symmetric positive definite matrix and let $A$ be a matrix with positive semi-definite symmetric part. Then the eigenvalues $\lambda$ of the matrix $B^{-1}A$ satisfy $\text{Re}(\lambda) \geq 0$.
\end{lemma}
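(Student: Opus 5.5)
The plan is a direct Rayleigh-quotient argument on an eigenpair of $B^{-1}A$, using that $B$ induces a positive definite Hermitian form and that the real part of the form induced by $A$ is controlled by its symmetric part. Throughout, I take $A$ and $B$ to be real matrices, as they are in the paper's setting: SBP and norm matrices in Kronecker form. Since eigenvalues of a real nonsymmetric matrix may be complex, I will work in $\mathbb{C}^n$ from the start.

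First, let $\lambda\in\mathbb{C}$ be an eigenvalue of $B^{-1}A$ with eigenvector $x\in\mathbb{C}^n$, $x\neq 0$. The matrix $B^{-1}$ exists because $B$ is positive definite. Multiplying $B^{-1}Ax=\lambda x$ by $B$ gives $Ax=\lambda Bx$. Taking the inner product with $x$, i.e.\ multiplying from the left by the conjugate transpose $x^{*}$, yields
\begin{align*}
x^{*}Ax=\lambda\, x^{*}Bx .
\end{align*}

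Second, I show that $x^{*}Bx$ is real and strictly positive. I write $x=u+iw$ with $u,w\in\mathbb{R}^n$ not both zero. Because $B$ is real symmetric, the cross terms cancel, leaving $x^{*}Bx=u^\top Bu+w^\top Bw>0$. Hence $\lambda = x^{*}Ax/(x^{*}Bx)$, and therefore $\text{Re}(\lambda)=\text{Re}(x^{*}Ax)/(x^{*}Bx)$.

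Third, I treat the real part of the numerator. Since $A$ is real, $\overline{x^{*}Ax}=x^{*}A^\top x$, so
\begin{align*}
\text{Re}(x^{*}Ax)=\tfrac12\left(x^{*}Ax+x^{*}A^\top x\right)=x^{*}\,\tfrac12(A+A^\top)\,x .
\end{align*}
Applying the same splitting $x=u+iw$ to the real symmetric matrix $\tfrac12(A+A^\top)$ gives
\begin{align*}
x^{*}\,\tfrac12(A+A^\top)\,x=u^\top \tfrac12(A+A^\top)u+w^\top \tfrac12(A+A^\top)w ,
\end{align*}
which is nonnegative by the positive semi-definiteness assumption. Combining this with the second step gives $\text{Re}(\lambda)\ge 0$.

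The only delicate point is the passage from real to complex vectors. The hypotheses are stated for real quadratic forms, while the eigenvector may be complex. The decomposition $x=u+iw$, together with the symmetry of $B$ and of $\tfrac12(A+A^\top)$ that makes the cross terms vanish, resolves this. No further obstacle is expected.
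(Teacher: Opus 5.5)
Your proof is correct: from $Ax=\lambda Bx$ you get $x^{*}Ax=\lambda\, x^{*}Bx$, hence $x^{*}\tfrac12(A+A^\top)x=\mathrm{Re}(\lambda)\, x^{*}Bx$. Your splitting $x=u+iw$ properly justifies applying the real (semi-)definiteness hypotheses to a possibly complex eigenvector. The paper gives no proof of its own and defers to Lemma 1 of Nordstr\"om and Lundquist, which, as far as I recall, uses essentially this same Rayleigh-quotient argument.
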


\begin{proof}
    See Lemma 1 in \cite{NordstromLundquist13}.
\end{proof}

\begin{proof}[Proof of Theorem \ref{thm:disc_g}]
 
We apply the velocity space average $\mean{\cdot}$ to the $n_v$ schemes for the non-equilibrium quantities $g_k$, i.e. each of the $k$ schemes in \eqref{eq:scheme_gk} is multiplied by its corresponding velocity weight $\omega_k$ and we add up the resulting equations for $k= 1,\ldots, n_v$ to obtain 
\begin{align*}
    \D_t \mean{\g} + \tfrac{1}{\varepsilon} \mean{v\tD_x \g} - \tfrac{1}{\varepsilon} \mean{v \tD_x \g} +\frac{\mean{v}}{\eps^2}\tD_x\ro& = - \left( \tfrac{\sigma_s}{\varepsilon^2} + \sigma_a \right) \mean{\g} + \mean{\SAT_{\scriptscriptstyle g,0}}, 
\end{align*}
and thus
\begin{align*}
    \D_t \mean{\g} & = - \left( \tfrac{\sigma_s}{\varepsilon^2} + \sigma_a \right)\mean{\g} + \mean{\SAT_{\scriptscriptstyle g,0}}.
\end{align*}
Inserting the specific SATs results in
\begin{align}
    \D_t \mean{\g} & = - \left( \tfrac{\sigma_s}{\varepsilon^2} + \sigma_a \right)\mean{\g} - \H_t^{-1} \t_\sB \t_\sB^\top \mean{\g - \g(0)} \nonumber \\
    & = - \left( \tfrac{\sigma_s}{\varepsilon^2} + \sigma_a \right) \mean{\g} - \H_t^{-1} \t_\sB \t_\sB^\top \mean{\g} + \H_t^{-1} \t_\sB \t_\sB^\top \mean{\g(0)}. \nonumber
    \end{align}
Assuming that the initial data satisfies $\mean{\g(0)} = 0$ and inserting $\D_t=\H^{-1}\Q_t$, this yields
    \begin{align}
    \H_t^{-1} \left(\Q_t + \t_\sB \t_\sB^\top \right) \mean{\g} & = - \left( \tfrac{\sigma_s}{\varepsilon^2} + \sigma_a \right) \mean{\g}. \label{eq:mean_g}
\end{align}

Note that the above takes the form of an eigenvalue problem. Thus, $\mean{\g} \equiv 0$ is the only solution to the equation if the value $- \left( \tfrac{\sigma_s}{\varepsilon^2} + \sigma_a \right)$ is not an eigenvalue of $\hat{\D}_t \coloneqq \H_t^{-1} (\Q_t + \t_\sB \t_\sB^\top)$. This is certainly the case if the eigenvalues $\lambda$ of $\hat{\D}$  satisfy $\text{Re}(\lambda) \geq 0$ since $\sigma_s>0$ and  $\sigma_a \ge 0$. To prove that this is true for $\hat{\D}_t$, we use Lemma \ref{lem:EV}. Since $\H$ is symmetric positive definite by definition, we only need to check that $\Q_t + \t_\sB \t_\sB^\top$ has positive semi-definite symmetric part.
\begin{align*}
        \frac{\Q_t + \t_\sB \t_\sB^\top + \Q_t^\top + (\t_\sB \t_\sB^\top)^\top}{2} & = \frac{1}{2} \left( \Q_t + \Q_t^\top + 2 \t_\sB \t_\sB^\top \right) = \frac{1}{2} \left( \E_t + 2 \t_\sB \t_\sB^\top \right).
\end{align*}
Recalling that $\E_t = \t_\sT \t_\sT^\top - \t_\sB \t_\sB^\top$, the above is rewritten as
\begin{align*}
        \frac{\Q_t + \t_\sB \t_\sB^\top + \Q_t^\top + (\t_\sB \t_\sB^\top)^\top}{2} & = \frac{1}{2} \left(\t_\sT \t_\sT^\top + \t_\sB \t_\sB^\top \right) \succeq 0.
\end{align*}
The last inequality follows since the resulting matrix is diagonal with only non-negative (i.e. $\geq 0$) elements. Thus, it follows from Equation \eqref{eq:mean_g} that $\mean{\g} = 0$.
\end{proof}

We proceed to show that the scheme \eqref{eq:scheme_single} is stable. 

\begin{theorem}\label{thm:scheme_single}
    The scheme \eqref{eq:scheme_single} is stable.
\end{theorem}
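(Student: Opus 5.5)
The plan is to mimic the continuous energy method of Theorem~\ref{thm:stability_continuous} at the discrete level, using the space-time norm $\H=\bH_t\otimes\bH_x$. I will multiply \eqref{eq:scheme_rho} from the left by $\ro^\top\H$ and each equation \eqref{eq:scheme_gk} by $\eps^2\g_k^\top\H$. Then I will take the velocity average, i.e.\ multiply the $k$-th equation by $\omega_k$ and sum over $k$, and add the result to the $\rho$ equation. Since $\H$ is diagonal and the Kronecker factors commute, $\H\D_t=\Q_t\otimes$-type products give $\H\D_t=\bQ_t\otimes\bH_x=\Q_t$ and $\H\tD_x=\bH_t\otimes\tQ_x$. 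The spatial operator $\bH_t\otimes\tQ_x$ is skew-symmetric because $\tQ_x$ is.

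The next step is to treat the spatial terms. The $\rho$ equation contributes $\ro^\top(\bH_t\otimes\tQ_x)\mean{v\g}$. The coupling term in the averaged $g$ equation gives $\sum_k\omega_k v_k\,\g_k^\top(\bH_t\otimes\tQ_x)\ro=\mean{v\g}^\top(\bH_t\otimes\tQ_x)\ro$. By skew-symmetry these two cancel, which is the discrete analogue of integrating $\partial_x(\rho\mean{vg})$ over a periodic domain. The term $\eps\sum_k\omega_kv_k\,\g_k^\top(\bH_t\otimes\tQ_x)\g_k$ vanishes for the same reason. The remaining term is
\[
-\eps\sum_k\omega_k\g_k^\top\H\mean{v\tD_x\g}=-\eps\,\mean{\g}^\top\H\mean{v\tD_x\g},
\]
and it vanishes by Theorem~\ref{thm:disc_g}, exactly as $\mean g=0$ was used in the continuous proof. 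This reliance on $\mean{\g}=0$ is the conceptual crux, and it has already been established.

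For the temporal terms I will use $\Q_t+\Q_t^\top=\E_t=\t_\sT\t_\sT^\top-\t_\sB\t_\sB^\top$, so that $\ro^\top\Q_t\ro=\tfrac12\ro^\top\E_t\ro$. This gives $\tfrac12\|\t_\sT^\top\ro\|^2_{\bH_x}-\tfrac12\|\t_\sB^\top\ro\|^2_{\bH_x}$, and the same holds for each $\g_k$ with the factor $\eps^2$. The initial SAT contributes $-\ro^\top\H\H_t^{-1}\t_\sB\t_\sB^\top(\ro-\ro(0))$, which equals $-(\t_\sB^\top\ro)^\top\bH_x\big(\t_\sB^\top\ro-\t_\sB^\top\ro(0)\big)$. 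Combining the terms and completing the square, $-\tfrac12 a^2+ab-\ldots$ rewrites as $\tfrac12 a^2-ab=\tfrac12(a-b)^2-\tfrac12 b^2$.

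With these pieces the estimate takes the form
\begin{align*}
&\|\t_\sT^\top\ro\|^2_{\bH_x}+\eps^2\mean{\|\t_\sT^\top\g\|^2_{\bH_x}}+\|\t_\sB^\top\ro-\ro(0)\|^2_{\bH_x}+\eps^2\mean{\|\t_\sB^\top\g-\g(0)\|^2_{\bH_x}}\\
&\quad=\|\ro(0)\|^2_{\bH_x}+\eps^2\mean{\|\g(0)\|^2_{\bH_x}}-2\sigma_a\|\ro\|^2_{\H}-2(\sigma_s+\eps^2\sigma_a)\mean{\|\g\|^2_{\H}}.
\end{align*}
Dropping the nonnegative penalty terms bounds the final-time energy by the initial energy minus the discrete dissipation, which mirrors \eqref{eq:energy_estimate}. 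If $\sigma_s,\sigma_a$ are variable, I will treat them as diagonal matrices commuting with the diagonal $\H$.

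The main obstacle is careful bookkeeping rather than any deep step. I must verify that $\H$ commutes correctly with $\tD_x$ and $\D_t$ under the Kronecker structure, which relies on diagonal norms, and that the velocity averages interchange with the space-time products. I also need to confirm that the $\eps$-scalings of the coupling terms match, namely $\eps^2\cdot\tfrac{v_k}{\eps^2}=v_k$, so that the cancellation against the macro flux is exact.
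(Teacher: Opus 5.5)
Your proposal is correct and follows the paper's proof essentially step for step. Both use the energy method with the space-time norm $\H$, cancel the coupling and transport terms by skew-symmetry of the periodic operator $\H\tD_x$, and remove $-\eps\mean{\g}^\top\H\mean{v\tD_x\g}$ via Theorem~\ref{thm:disc_g}. Both then complete the square using $\Q_t+\Q_t^\top=\bE_t\otimes\bH_x$ together with the initial SATs, and your final identity is exactly twice the paper's \eqref{eq:stability_estimate}. The only slip is notational: you write $\Q_t+\Q_t^\top=\t_\sT\t_\sT^\top-\t_\sB\t_\sB^\top$, which omits the $\bH_x$ weight, but you use the correctly weighted norms $\|\t_\sT^\top\ro\|_{\bH_x}$ immediately afterwards.
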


\begin{proof}
Multiplying \eqref{eq:scheme_rho} by $\ro^\top \H$ and \eqref{eq:scheme_gk} by $\varepsilon^2\omega_k\g_k^\top \H$, for $k=1,\ldots,n_v$, yields
\begin{subequations}
    \begin{align*}
        \ro^\top \H \D_t \ro + \ro^\top \H \tD_x \mean{v\g} = & - \sigma_a \norm{\ro}^2_\H + \ro^\top \H \SAT_{\scriptscriptstyle \rho,0}, \\ \nonumber \\
        \varepsilon^2\omega_k \g_k^\top \H \D_t \g_k + \varepsilon \omega_k v_k\g_k^\top \H \tD_x \g_k - \varepsilon \omega_k \g_k^\top \H \mean{v\tD_x \g} + \omega_kv_k\g_k^\top \H \tD_x \ro = & - \omega_k\left( \sigma_s + \varepsilon^2 \sigma_a \right) \norm{\g_k}^2_\H \\
        & + \eps^2w_k\g_k^\top\H\SAT_{\scriptscriptstyle \g_k,0}.
    \end{align*}
\end{subequations}
Adding the $n_v + 1$ equations above and using the notation $\mean{\cdot}$ for averaging in discrete velocity space, we have
\begin{equation}
    \begin{aligned}\label{eq:single_stability}
        &\ro^\top \Q_t \ro + \ro^\top \H \tD_x \mean{v\g} + \varepsilon^2 \mean{\g^\top \Q_t \g} + \varepsilon \mean{v\g^\top \H \tD_x \g} - \varepsilon \mean{\g}^\top \H \mean{v\tD_x \g} + \mean{v\g^\top} \H \tD_x \ro\\
        &= -\sigma_a \norm{\ro}^2_\H - \left( \sigma_s + \varepsilon^2 \sigma_a \right) \vvvert \g \vvvert^2_\H + \ro^\top \H\SAT_{\scriptscriptstyle \rho,0} + \varepsilon^2 \mean{\g^\top \H \SAT_{\scriptscriptstyle \g,0}}.
    \end{aligned}
\end{equation}
Furthermore, using the SBP properties 
\begin{align*}
\ro^\top \H \tD_x \mean{v\g} + \mean{v\g^\top} \H \tD_x \ro &= \ro^\top(\H\tD_x+\tD_x^\top\H)\mean{v\g}=0,\\
\mean{v\g^\top \H \tD_x \g} &= \frac12\left(\mean{v\g^\top \H \tD_x \g} + \mean{v\g^\top \tD_x^\top \H \g} \right)=0,
\end{align*}
as well as the assertion $\mean{\g}=0$ from Theorem \ref{thm:disc_g}, the above is equivalent to
\begin{align}\label{eq:semidiscrete_stability}
    \ro^\top \Q_t \ro + \varepsilon^2 \mean{\g^\top \Q_t \g} = & -\sigma_a \norm{\ro}^2_\H - \left( \sigma_s + \varepsilon^2 \sigma_a \right) \vvvert \g \vvvert^2_\H 
 + \ro^\top \H\SAT_{\scriptscriptstyle \rho,0} + \varepsilon^2\mean{\g^\top \H \SAT_{\scriptscriptstyle \g,0}}.
\end{align}
Inserting the specific forms of the SATs given by \eqref{eq:SAT_trho} and \eqref{eq:SAT_tg} and using $\Q_t+\Q_t^\top=(\bE_t \otimes \bH_x) = \left( (\bt_\sT \bt^\top_\sT - \bt_\sB \bt_\sB^\top) \otimes \bH_x \right)$, we finally arrive at the stability estimate
\begin{align}
    & \tfrac{1}{2} \ro^\top (\bt_\sT \bt_\sT^\top \otimes \bH_x) \ro  + \tfrac{\varepsilon^2}{2} \mean{\g^\top (\bt_\sT \bt_\sT^\top \otimes \bH_x) \g} \nonumber \\
    = & -\sigma_a \norm{\ro}^2_\H - \left( \sigma_s + \varepsilon^2 \sigma_a \right) \vvvert \g \vvvert^2_\H \nonumber  + \tfrac{1}{2} \ro^\top (\bt_\sB \bt_\sB^\top \otimes \bH_x) \ro  + \tfrac{\varepsilon^2}{2}\mean{\g^\top (\bt_\sB \bt_\sB^\top \otimes \bH_x) \g} \nonumber \\
    & - \ro^\top (\bt_\sB \bt_\sB^\top \otimes \bH_x) \left(\ro - \ro(0)\right) - \eps^2\mean{\g^\top (\bt_\sB \bt_\sB^\top \otimes \bH_x) (\g - \g(0))} \nonumber \\
    = & -\sigma_a \norm{\ro}^2_\H - \left( \sigma_s + \varepsilon^2 \sigma_a \right) \vvvert \g \vvvert^2_\H \nonumber \\
    & - \tfrac{1}{2}  (\ro^\top - \ro(0)^\top) (\bt_\sB \bt_\sB^\top \otimes \bH_x) (\ro - \ro(0)) + \tfrac{1}{2} \ro(0)^\top (\bt_\sB \bt_\sB^\top \otimes \bH_x) \ro(0) \nonumber \\
    & - \tfrac{\eps^2}{2}  \mean{\left(\g^\top - \g(0)^\top\right) (\bt_\sB \bt_\sB^\top \otimes \bH_x) \left(\g - \g(0)\right)} + \tfrac{\eps^2}{2} \mean{\g(0)^\top (\bt_\sB \bt_\sB^\top \otimes \bH_x) \g(0)}. \label{eq:stability_estimate}
\end{align}
All the terms on the right-hand side of the above equation are either non-positive or bounded by the initial data, hence we arrive at the stability estimate
\begin{align*}
 & \tfrac{1}{2} \ro^\top (\bt_\sT \bt_\sT^\top \otimes \bH_x) \ro  + \tfrac{\varepsilon^2}{2} \mean{\g^\top (\bt_\sT \bt_\sT^\top \otimes \bH_x) \g} \\
    \le & - \sigma_a \norm{\ro}^2_\H - (\sigma_s + \eps^2 \sigma_a) \vvvert \g \vvvert^2_\H + \tfrac{1}{2} \ro(0)^\top (\bt_\sB \bt_\sB^\top \otimes \bH_x) \ro(0) + \tfrac{\eps^2}{2} \mean{\g(0)^\top (\bt_\sB \bt_\sB^\top \otimes \bH_x) \g(0)}. 
\end{align*}
\end{proof}

We note that multiplying by $\H$ in the above proof transfers integration in both space and time to the fully discrete level. Thus, the estimate here actually mimics the continuous one on the fully discrete level.
Finally, we prove that the scheme \eqref{eq:scheme_rho}-\eqref{eq:scheme_gk} is asymptotic preserving.
\begin{theorem}\label{thm:asymptotic}
    The scheme \eqref{eq:scheme_single} is asymptotic preserving.
\end{theorem}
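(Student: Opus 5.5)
We will show that, for fixed $n_x,n_t,n_v$, letting $\eps\to0$ in \eqref{eq:scheme_single} yields a consistent space-time SBP discretization of the limit equation \eqref{eq:heat} (with $\mean{v^2}$ replaced by its discrete quadrature value). The strategy mimics the formal Chapman--Enskog argument at the discrete level.

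\emph{Step 1: expansion of the micro equation.} Multiply \eqref{eq:scheme_gk} by $\eps^2$ to obtain
\[
\eps^2\bigl(\D_t\g_k+\sigma_a\g_k-\SAT_{\scriptscriptstyle \g_k,0}\bigr)+\eps\bigl(v_k\tD_x\g_k-\mean{v\tD_x\g}\bigr)+v_k\tD_x\ro=-\sigma_s\g_k .
\]
We assume $\ro$ and $\g_k$ remain bounded as $\eps\to0$. This is justified by the fully discrete estimate of Theorem~\ref{thm:scheme_single}: for fixed mesh it controls $\norm{\ro}_\H$ uniformly in $\eps$, and it also controls $\sigma_s\vvvert\g\vvvert_\H^2$. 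We further assume the initial data $\g_k(0)$ are bounded, i.e.\ well-prepared data.

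Passing to the limit, the $O(\eps)$ and $O(\eps^2)$ terms vanish. With $\sigma_s$ acting as a positive diagonal matrix $\Sigma_s$, this gives the discrete Fick law $\g_k=-v_k\Sigma_s^{-1}\tD_x\ro$ at the limit. Consistency with $\mean{\g}=0$ (Theorem~\ref{thm:disc_g}) holds automatically, since $\mean{v}=0$.

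\emph{Step 2: substitution into the macro equation.} Substituting into \eqref{eq:scheme_rho} yields
\[
\D_t\ro-\mean{v^2}\,\tD_x\Sigma_s^{-1}\tD_x\ro=-\sigma_a\ro+\SAT_{\scriptscriptstyle \rho,0}.
\]
This is the space-time SBP discretization of \eqref{eq:heat} with periodic boundaries incorporated in $\tD_x$ and the initial condition imposed weakly via the temporal SAT. It is consistent because $\tD_x$ and $\D_t$ are consistent derivative approximations, and the wide-stencil second derivative $\tD_x\Sigma_s^{-1}\tD_x$ is consistent with $\partial_x(\sigma_s^{-1}\partial_x\cdot)$. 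One can also check that this limit scheme is itself stable, via the energy method: $\ro^\top\H\tD_x\Sigma_s^{-1}\tD_x\ro=-(\tD_x\ro)^\top\H\Sigma_s^{-1}(\tD_x\ro)\le0$, using skew-symmetry of $\tQ_x$ and diagonal norms. Here we take $\sigma_s$ nodal so that $\Sigma_s$ commutes with $\H$.

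\emph{Main obstacle.} The delicate step is rigorously justifying the limit, namely showing that the solution $(\ro^\eps,\g^\eps)$ of the linear system converges as $\eps\to0$. The stability bound gives $\eps$-uniform bounds for $\ro$ and $\g$, but only if the initial data $\eps^2\vvvert\g(0)\vvvert^2$ and $\ro(0)$ are bounded. The estimate is weak in $\g$ at $\eps$-level; boundedness of $\g$ itself follows from the $\sigma_s\vvvert\g\vvvert_\H^2$ term.

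Since the system is a finite-dimensional linear system $A(\eps)u=b(\eps)$, with $A$ polynomial in $1/\eps$, I would argue as follows. Bounded solutions have convergent subsequences, and any limit point satisfies the limit equations of Steps 1–2. The limit macro system has the unique solution $\ro$, because $\D_t$ plus the temporal SAT is invertible with positive semidefinite symmetric part and the diffusion term is negative semidefinite, as in Lemma~\ref{lem:EV}. Uniqueness of the limit then gives convergence of the whole family.

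The remaining technical point is that the terms $\eps^2\SAT_{\scriptscriptstyle \g_k,0}$ vanish. This holds provided $\g_k(0)$ is bounded; if the initial data are not well prepared, an initial layer concentrated at the bottom temporal node could appear, and I would note this restriction explicitly.
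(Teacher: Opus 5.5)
Your Steps 1 and 2 are the paper's proof. You drop the $O(\eps)$ and $O(\eps^2)$ terms of the micro equation to get the discrete Fick law $v_k\tD_x\ro=-\sigma_s\g_k$, substitute it into the macro equation, note consistency with \eqref{eq:heat}, and check energy stability of the limit scheme using $\H\tD_x+\tD_x^\top\H=0$. The paper stops there and simply assumes the discrete quantities stay bounded. Your compactness-plus-uniqueness argument for actual convergence of $(\ro^\eps,\g^\eps)$ is an addition and is sound in outline, but two of its justifications are wrong as stated.

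First, Theorem~\ref{thm:scheme_single} does not control $\norm{\ro}_\H$ uniformly in $\eps$ in general. $\ro$ enters the space-time norm only through $\sigma_a\norm{\ro}^2_\H$, so for $\sigma_a=0$ (admissible, and the case in all numerical examples) you only control $\ro$ at the top time node and, through the SAT penalty, at the bottom one. Boundedness of $\ro$ should instead come from the $\eps$-independent macro equation $(\D_t+\H_t^{-1}\t_\sB\t_\sB^\top+\sigma_a)\ro=-\tD_x\mean{v\g}+\H_t^{-1}\t_\sB\t_\sB^\top\ro(0)$, with $\g$ bounded by the $\sigma_s\vvvert\g\vvvert^2_\H$ term.

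Second, that step and your uniqueness claim both need the temporal operator to be nonsingular, which Lemma~\ref{lem:EV} does not give, since $\mathrm{Re}(\lambda)\ge0$ allows $\lambda=0$. What you need is that $\bQ_t+\bt_\sB\bt_\sB^\top$ is nonsingular, equivalently that $\bD_t$ has no nonzero null vector vanishing at both endpoints. This holds when the kernel of $\bD_t$ is just the constants (e.g.\ Gauss--Lobatto operators), but it is not implied by Definition~\ref{def:SBP}. Granting it, uniqueness follows as you claim. After multiplying by $\H$, the limit operator is $A+B$ with $A=(\bQ_t+\bt_\sB\bt_\sB^\top)\otimes\bH_x$ nonsingular with positive semidefinite symmetric part, and $B=\sigma_a\H+\mean{v^2}\tD_x^\top\H\sigma_s^{-1}\tD_x$ symmetric positive semidefinite. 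Then $(A+B)x=0$ forces $x^\top Bx=0$, hence $Bx=0$, $Ax=0$, and $x=0$.

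Finally, well-prepared data are more than you need on a fixed mesh. Bounded $\eps\g(0)$, the natural scaling $\eps\g_{init}=f_{init}-\rho_{init}$, already bounds $\g$ through the $\eps^2$-weighted initial-data term of the estimate, and it gives $\eps^2\SAT_{\scriptscriptstyle \g_k,0}=O(\eps)$. So no restriction to $O(1)$ initial data for $\g$ is required.
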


\begin{proof}
When letting $\varepsilon \to 0$, the dominating terms in the approximation \eqref{eq:scheme_single} (assuming that the derivative approximations are appropriately bounded) are
\begin{subequations}
    \begin{align}
        \D_t \ro  + \tD_x \mean{v\g} & = -\sigma_a \ro + \SAT_{\scriptscriptstyle \rho,0}, \label{eq:as_rho}\\
        v_k\tD_x \ro & = - \sigma_s \g_k,\quad k=1,\ldots,n_v \label{eq:as_gk}
    \end{align}
\end{subequations}
Inserting \eqref{eq:as_gk} for $k=1,\ldots,n_v$ into \eqref{eq:as_rho}, we obtain
\begin{align*}
    \D_t \ro - \tfrac{1}{\sigma_s}\tD_x \left(\mean{v^2} \tD_x \ro \right) & = -\sigma_a \ro + \SAT_{\scriptscriptstyle \ro,0}, 
\end{align*}
which can be rewritten as
\begin{align}
    \D_t \ro & = \tD_x \left( \tfrac{\mean{v^2}}{\sigma_s} \tD_x \ro \right) - \sigma_a \ro + \SAT_{\scriptscriptstyle \rho,0}. \label{eq:asymptotic_scheme_single}
\end{align}
The above equation \eqref{eq:asymptotic_scheme_single} is a consistent approximation to the continuous second-order linear differential equation 
\begin{align*}
    \rho_t = \left(\tfrac{\mean{v^2}}{\sigma_s(x)}\rho_{x}\right)_x - \sigma_a \rho.
\end{align*} 
What is left to show is that the scheme is also a \emph{stable} approximation of this second-order macroscopic equation. To this end, we multiply \eqref{eq:asymptotic_scheme_single} by $\ro^\top \H$:
\begin{align*}
    \ro^\top \H \D_t \ro & = \ro^\top \H \tD_x \left( \tfrac{\mean{v^2}}{\sigma_s} \tD_x \ro \right) - \sigma_a \ro^\top \H \ro + \ro^\top \H \SAT_{\scriptscriptstyle \rho,0}, 
\end{align*}
which yields, using the SBP property $\H\tD_x+\tD_x^\top\H =0$ for the spatial operator $\tD_x$,
\begin{align*}
    \tfrac{1}{2}\ro^\top \Q_t \ro & = - (\D_x \ro)^\top \H \left( \tfrac{1}{\sigma_s} \D_x \ro \right) - \sigma_a \norm{\ro}^2_\H + \ro^\top \H \SAT_{\scriptscriptstyle \rho,0} \leq \tfrac{1}{2} \ro(0)^\top \t_\sB \t_\sB^\top \ro(0) .
\end{align*}
The last inequality follows from the first term on the right-hand side being negative semi-definite, since by assumption we have $\sigma_a(x)\ge0 $ and $\sigma_s(x) > 0$, and the terms from the SAT are bounded, as shown in the stability estimate \eqref{eq:stability_estimate} above. 
\end{proof}

\subsubsection{Energy-stable Dirichlet boundary conditions}\label{sec:DirichletBdry}

In the following, we modify the scheme \eqref{eq:scheme_single} to include homogeneous Dirichlet boundary conditions via a specific choice of SATs. First, the periodic SBP operator $\tD_x$ needs to be replaced by the original space-time operator $\D_x = \It \otimes \bD_x$ as defined at the beginning of Section~\ref{sec:single_element_single_slab}, with $\bD_x$ the spatial SBP operator as in Definition~\ref{def:SBP}. 
Furthermore, the scheme is augmented by SATs for the left and right boundary, denoted by $\SAT^{\scriptscriptstyle \ro}_{L},\ \SAT^{\scriptscriptstyle \ro}_{R}$ for the macro equation for $\rho$ and $\SAT^{\scriptscriptstyle \g_k}_{LR}$ for the micro equation for the non-equilibrium quantity $g$. The resulting scheme is \begin{subequations}\label{eq:scheme_Dirichlet_single}
    \begin{align}
        \D_t \ro + \D_x \mean{v\g} & = - \sigma_a \ro +\SAT^\rho_L  + \SAT^\rho_R  + \SAT_{\scriptscriptstyle \rho,0}, \label{eq:scheme_rho_Dirichlet}\\
        \D_t \g_k + \tfrac{v_k}{\varepsilon} \D_x \g_k - \tfrac{1}{\varepsilon} \mean{v\D_x\g} + \tfrac{v_k}{\varepsilon^2} \D_x \ro & = -\left(\tfrac{\sigma_s}{\varepsilon^2} + \sigma_a\right) \g_k+\SAT^{\scriptscriptstyle \g_k}_{LR} - \mean{\SAT^{\scriptscriptstyle \g}_{LR}} +\SAT_{\scriptscriptstyle \g_k,0}, \label{eq:scheme_gk_Dirichlet} \\\nonumber
        &\qquad k=1,\ldots, n_v.
    \end{align}
\end{subequations}
where
\begin{subequations}\label{eq:SATs_Dirichlet}
    \begin{align}
    \SAT^{\scriptscriptstyle \ro}_{L} &= -\tau_\rho \H_x^{-1} \t_\sL\t_\sL^\top \mean{v^+(\ro+\varepsilon\g)} \label{eq:SAT_xrhoL}\,, \\
    \SAT^{\scriptscriptstyle \ro}_{R} &= \tau_\rho \H_x^{-1} \t_\sR\t_\sR^\top \mean{v^-(\ro+\varepsilon\g)} \label{eq:SAT_xrhoR}\,, \\
    \SAT^{\scriptscriptstyle \g_k}_{LR} &= \left\{\begin{array}{cc}-\tau_{g}v_k\H_x^{-1} \t_\sL\t_\sL^\top \left(\ro+\varepsilon \g_k\right),& v_k>0, \\ \tau_{g}v_k\H_x^{-1} \t_\sR\t_\sR^\top \left(\ro+\varepsilon \g_k\right), & v_k < 0\,. \end{array}\right.\label{eq:SAT_xgLR}
    \end{align}
\end{subequations}
For the above scheme, we have the following stability result.

\begin{theorem}\label{thm:stabilityDirichlet}
     The scheme \eqref{eq:scheme_Dirichlet_single} with SATs \eqref{eq:SATs_Dirichlet} is stable for $\tau_\rho=\frac1{2\varepsilon}$ and $\tau_g=\frac1{2\varepsilon^2}$ if the quadrature rule used for the velocity space is symmetric.
\end{theorem}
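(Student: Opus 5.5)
The plan is to mimic the continuous proof of Theorem~\ref{thm:stability_continuous} at the fully discrete level, reusing the energy-method structure of Theorem~\ref{thm:scheme_single}. The difference is that $\D_x$ is no longer skew-symmetric: the SBP property produces boundary terms, which must be absorbed by the SATs \eqref{eq:SATs_Dirichlet}.

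The first step is to re-establish $\mean{\g}=0$ for the Dirichlet scheme. I average \eqref{eq:scheme_gk_Dirichlet} over the velocity nodes. The flux terms $\tfrac1\eps\mean{v\D_x\g}$ cancel exactly as in Theorem~\ref{thm:disc_g}. The $\rho$-term vanishes because $\mean{v}_h=0$, which holds for a symmetric quadrature. The boundary SATs drop out because $\mean{\SAT^{\g}_{LR}-\mean{\SAT^{\g}_{LR}}}=\mean{\SAT^{\g}_{LR}}(1-\mean{1})=0$; this is precisely why the correction term $-\mean{\SAT^{\g}_{LR}}$ is included. What remains is equation \eqref{eq:mean_g}, and the argument via Lemma~\ref{lem:EV} gives $\mean{\g}=0$ verbatim.

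The second step is the energy method. I multiply \eqref{eq:scheme_rho_Dirichlet} by $\ro^\top\H$ and \eqref{eq:scheme_gk_Dirichlet} by $\eps^2\omega_k\g_k^\top\H$, then sum over $k$. Because $\mean{\g}=0$, the terms $-\eps\mean{\g}^\top\H\mean{v\D_x\g}$ and $-\eps^2\mean{\g}^\top\H\mean{\SAT^{\g}_{LR}}$ vanish. For the spatial terms I use $\Q_x+\Q_x^\top=\bH_t\otimes(\bt_\sR\bt_\sR^\top-\bt_\sL\bt_\sL^\top)$ in place of skew-symmetry. This yields, for each time node weighted by $\bH_t$, the discrete counterparts of the boundary terms in \eqref{eq:energy_decay_unfinished}:
\[
\bigl[\rho\mean{vg}+\tfrac\eps2\mean{vg^2}\bigr]_{x_R}-\bigl[\rho\mean{vg}+\tfrac\eps2\mean{vg^2}\bigr]_{x_L}.
\]
The temporal terms and the initial SATs are handled exactly as in \eqref{eq:stability_estimate}.

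The third step, which I expect to be the main computational obstacle, is to show that these boundary terms combined with the contributions $\ro^\top\H\SAT^\rho_{L,R}$ and $\eps^2\mean{\g^\top\H\SAT^{\g}_{LR}}$ form a nonnegative quadratic form on the left-hand side. At $x_R$ I split the velocity nodes into $v_k<0$ and $v_k>0$. For $v_k<0$, moving the SAT contributions with $\tau_\rho=\frac1{2\eps}$ and $\tau_g=\frac1{2\eps^2}$ to the left gives
\[
v_k\Bigl(\rho g_k+\tfrac\eps2 g_k^2-\tfrac1{2\eps}\rho(\rho+\eps g_k)-\tfrac12 g_k(\rho+\eps g_k)\Bigr)=-\tfrac{v_k}{2\eps}\rho^2,
\]
so these nodes contribute $-\tfrac1{2\eps}\mean{v^-}\rho^2$. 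By $\mean{v^-}=-\mean{v^+}$, which again follows from the symmetric quadrature, this equals $\tfrac1{2\eps}\mean{v^+}\rho^2$. Adding the untouched $v_k>0$ terms $\mean{v^+(\rho g+\tfrac\eps2 g^2)}$ yields $\tfrac1{2\eps}\mean{v^+(\rho+\eps g)^2}\ge0$ at $x_R$. The mirror computation at $x_L$ gives $-\tfrac1{2\eps}\mean{v^-(\rho+\eps g)^2}\ge0$.

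These boundary contributions, weighted by the positive diagonal $\bH_t$, are therefore dissipative. Combined with the temporal estimate, they give the same bound as in Theorem~\ref{thm:scheme_single} with the equality replaced by an inequality, mirroring \eqref{eq:energy_decay_in_time_D}. Care is needed with sign conventions and with the placement of $\H_x^{-1}$ against $\H$ so that exactly $\bH_t\otimes\bt\bt^\top$ appears.
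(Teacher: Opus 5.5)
Your proposal is correct and follows the paper's proof. Both use the energy method with multipliers $\ro^\top\H$ and $\eps^2\omega_k\g_k^\top\H$, the non-periodic SBP identity $\Q_x+\Q_x^\top=\bH_t\otimes(\bt_\sR\bt_\sR^\top-\bt_\sL\bt_\sL^\top)$, and absorption of the resulting boundary terms by the SATs with $\tau_\rho=\frac1{2\eps}$, $\tau_g=\frac1{2\eps^2}$ via $\mean{v^-}=-\mean{v^+}$. You also justify $\mean{\g}=0$ for the Dirichlet scheme, which the paper uses without re-proof and which is what the correction $-\mean{\SAT^{\g}_{LR}}$ secures; your dissipative terms, $\frac1{2\eps}\mean{v^+(\rho+\eps g)^2}$ at $x_R$ and $-\frac1{2\eps}\mean{v^-(\rho+\eps g)^2}$ at $x_L$, are correctly placed and consistent with \eqref{eq:energy_decay_in_time_D}, whereas the paper's displayed final form of $b_{LR}$ interchanges the two boundaries.
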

\begin{proof}
Analogous to the derivation of equation \eqref{eq:single_stability} for a single element and single time slab, multiplying the macro equation~\eqref{eq:scheme_rho_Dirichlet} by $\ro^\top \H$ and the micro equation~\eqref{eq:scheme_gk_Dirichlet} by $\varepsilon^2\omega_k\g_k^\top \H$, and adding the resulting $n_v + 1$ equations now yields
    \begin{align}\label{eq:single_stability_Dirichlet}
    \begin{split}
        &\ro^\top \Q_t \ro + \ro^\top \H \D_x \mean{v\g} + \varepsilon^2 \mean{\g^\top \Q_t \g} + \varepsilon \mean{v\g^\top \H \D_x \g} - \varepsilon \mean{\g}^\top \H \mean{v\D_x \g} + \mean{v\g^\top} \H \D_x \ro \\
        =&-\sigma_a \norm{\ro}^2_\H - \left( \sigma_s + \varepsilon^2 \sigma_a \right) \vvvert \g \vvvert^2_\H + \ro^\top \H\SAT_{\scriptscriptstyle \rho,0} + \varepsilon^2 \mean{\g^\top \H \SAT_{\scriptscriptstyle \g,0}}\\
        &+ \ro^\top \H\SAT^{\scriptscriptstyle \rho}_L + \ro^\top \H\SAT^{\scriptscriptstyle \rho}_R  + \varepsilon^2 \mean{\g^\top \H \SAT^{\scriptscriptstyle \g}_{LR}} - \varepsilon^2 \mean{\g^\top} \H \mean{\SAT^{\scriptscriptstyle \g}_{LR}} .
        \end{split}
    \end{align}
The SBP properties in space in this non-periodic case are given by
\[\H\D_x+\D_x^\top\H = \Q_x+\Q_x^\top=\bH_t \otimes \bE_x = \bH_t \otimes (\bt_\sR \bt^\top_\sR - \bt_\sL \bt_\sL^\top)\]
which yields
\begin{align*}
\ro^\top \H \D_x \mean{v\g} + \mean{v\g^\top} \H \D_x \ro &= \ro^\top(\Q_x+\Q_x^\top)\mean{v\g}= \ro^\top \left(\bH_t \otimes \left(\bt_\sR \bt^\top_\sR - \bt_\sL \bt_\sL^\top\right)\right) \mean{v\g},\\
\mean{v\g^\top \H \D_x \g} &= \frac12\mean{v\g^\top (\Q_x+\Q_x^\top) \g}=\frac12\mean{v\g^\top \left(\bH_t \otimes \left(\bt_\sR \bt^\top_\sR - \bt_\sL \bt_\sL^\top\right)\right)  \g},
\end{align*}

Inserting this into \eqref{eq:single_stability_Dirichlet} and using $\mean{\g}=0$, we have
\begin{equation}
    \begin{aligned}\label{eq:single_stability_Dirichlet2}
        &\ro^\top \Q_t \ro  + \varepsilon^2 \mean{\g^\top \Q_t \g} + \ro^\top  \left(\bH_t \otimes \left(\bt_\sR \bt^\top_\sR - \bt_\sL \bt_\sL^\top\right)\right) \mean{v\g} + \frac\varepsilon2\mean{v\g^\top \left(\bH_t \otimes \left(\bt_\sR \bt^\top_\sR - \bt_\sL \bt_\sL^\top\right)\right)  \g}\\
         =&-\sigma_a \norm{\ro}^2_\H - \left( \sigma_s + \varepsilon^2 \sigma_a \right) \vvvert \g \vvvert^2_\H + \ro^\top \H\SAT_{\scriptscriptstyle \rho,0} + \varepsilon^2 \mean{\g^\top \H \SAT_{\scriptscriptstyle \g,0}}\\
         &+ \ro^\top \H\SAT^{\scriptscriptstyle \rho}_L + \ro^\top \H\SAT^{\scriptscriptstyle \rho}_R + \varepsilon^2 \mean{\g^\top \H \SAT^{\scriptscriptstyle \g}_{LR}}.
    \end{aligned}
\end{equation}
Concerning the spatial SATs in the last line of the right-hand side of \eqref{eq:single_stability_Dirichlet2}, we have
\begin{align*}
\ro^\top \H\SAT^{\scriptscriptstyle \rho}_L &= -\tau_\rho \ro^\top\left(\bH_t \otimes \bt_\sL \bt_\sL^\top\right)\mean{v^+(\ro+\varepsilon\g)},\\
\ro^\top \H\SAT^{\scriptscriptstyle \rho}_R &= \tau_\rho \ro^\top\left(\bH_t \otimes \bt_\sR \bt_\sR^\top\right)\mean{v^-(\ro+\varepsilon\g)},\\
\mean{\g^\top \H \SAT^{\scriptscriptstyle \g}_{LR}} &=-\tau_g \mean{v^+\g^\top\left(\bH_t \otimes \bt_\sL \bt_\sL^\top\right)(\ro+\varepsilon\g)} + \tau_g \mean{v^-\g^\top\left(\bH_t \otimes \bt_\sR \bt_\sR^\top\right)(\ro+\varepsilon\g)}.
\end{align*}

For $\tau_\rho=\frac1{2\varepsilon}$ and $\tau_g=\frac1{2\varepsilon^2}$, the terms influenced by homogeneous Dirichlet boundary conditions can be summarized as follows
\begin{align*}
b_{LR}& = &&\ro^\top \H\SAT^{\scriptscriptstyle \rho}_L + \ro^\top \H\SAT^{\scriptscriptstyle \rho}_R + \varepsilon^2 \mean{\g^\top \H \SAT^{\scriptscriptstyle \g}_{LR}} \\
&&&- \ro^\top  \left(\bH_t \otimes \left(\bt_\sR \bt^\top_\sR - \bt_\sL \bt_\sL^\top\right)\right) \mean{v\g} + \frac\varepsilon2\mean{v\g^\top \left(\bH_t \otimes \left(\bt_\sR \bt^\top_\sR - \bt_\sL \bt_\sL^\top\right)\right)  \g}\\
&=&&-\frac{\mean{v^+}}{2\varepsilon}\ro^\top\left(\bH_t \otimes \bt_\sL \bt_\sL^\top\right)\ro + \frac{\mean{v^-}}{2\varepsilon}\ro^\top\left(\bH_t \otimes \bt_\sR \bt_\sR^\top\right)\ro\\
&&&- \ro^\top  \left(\bH_t \otimes \bt_\sR \bt^\top_\sR \right) \mean{v^+\g} + \ro^\top  \left(\bH_t \otimes \bt_\sL \bt_\sL^\top\right) \mean{v^-\g}\\
&&& - \frac\varepsilon2 \mean{v^+\g^\top\left(\bH_t \otimes \bt_\sL \bt_\sL^\top\right)\g} +\frac\varepsilon2\mean{v^-\g^\top\left(\bH_t \otimes \bt_\sR \bt_\sR^\top\right)\g}\,.
\end{align*}
Discretizing the velocity space by a symmetric quadrature formula with $w_k = w_{k'}$ and $v_k = -v_{k'}$ for $k+k'=n_v+1$, we can estimate this term by
\begin{align*}
b_{LR}& = -\frac{1}{2\varepsilon}\mean{v^+(\ro+\varepsilon\g)^\top\left(\bH_t \otimes \bt_\sL \bt_\sL^\top\right)(\ro+\varepsilon\g)} + \frac{1}{2\varepsilon}\mean{v^-(\ro+\varepsilon\g)^\top\left(\bH_t \otimes \bt_\sR \bt_\sR^\top\right)(\ro+\varepsilon\g)} \le 0.
\end{align*}
Equation \eqref{eq:single_stability_Dirichlet2} thus becomes
\begin{equation*}
 \ro^\top \Q_t \ro  + \varepsilon^2 \mean{\g^\top \Q_t \g}
= b_{LR} -\sigma_a \norm{\ro}^2_\H - \left( \sigma_s + \varepsilon^2 \sigma_a \right) \vvvert \g \vvvert^2_\H + \ro^\top \H\SAT_{\scriptscriptstyle \rho,0}  + \varepsilon^2 \mean{\g^\top \H \SAT_{\scriptscriptstyle \g,0}}
\end{equation*}
with $b_{LR}\le 0$. 
In fact, the above equation mimics the corresponding equation \eqref{eq:semidiscrete_stability} in the periodic case, just with an additional non-positive term $b_{LR}$.
Therefore, analogously to the discussion in the proof for periodic problems, stability in time can be transferred to the case of homogeneous Dirichlet boundary conditions.
\end{proof}

\subsection{Multi-element spatial domain}\label{sec:multiel}

In this section, we consider a spatial domain $\Omega_x$ consisting of several elements, as depicted in figure~\ref{fig:multisingle}. For the sake of clarity in the presentation of the scheme and the readability of the proofs of its properties, we assume there are only three elements. We denote these elements using roman letters as sub-domain \rone,  \text{} \rtwo \text{} and \rthree. There is no loss of generality since the scheme can be applied to any number of elements in space. In addition, the restriction to uniform element size in the below formulation is purely technical. We focus on the discretization in space first, before we use tensor products as before to obtain the full space-time discretization. We use SATs at the interfaces to couple adjacent elements together. 

\begin{figure}[h!]
    \centering
    \includegraphics[width=0.7\linewidth]{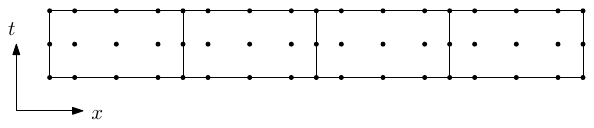}
    \caption{Example grid with multiple elements in the spatial direction and one time slab.}
    \label{fig:multisingle}
\end{figure}

We consider first the semi-discretization (i.e. the problem discretized only in space) to see how the boundary and interface SATs can be incorporated into the spatial SBP operators, as in Section~\ref{sec:single_element_single_slab}. 

The semi-discrete schemes for each of the subdomains read
\begin{align*}
    \text{Domain \rone: } && \partial_t \ro^\srone + \bD_x \mean{v\g^\srone} & = - \sigma_a \ro^\srone + \SAT^\srone_{\scriptscriptstyle \ro, \mathsf{I}} + \SAT^\srone_{\scriptscriptstyle \ro, \mathsf{BC}}, \\
    && \partial_t \g_k^\srone + \tfrac{v_k}{\varepsilon} \bD_x \g_k^\srone - \tfrac{1}{\varepsilon} \mean{v\bD_x \g^\srone} + \tfrac{v_k}{\varepsilon^2} \bD_x \ro^\srone & = - \left( \tfrac{\sigma_s}{\varepsilon^2} + \sigma_a \right) \g_k^\srone + \SAT^\srone_{\scriptscriptstyle \g_k, \mathsf{I}} + \SAT^\srone_{\scriptscriptstyle \g_k, \mathsf{BC}}, \\ && k & =1,\ldots,n_v,\\ \\
    \text{Domain \rtwo: } && \partial_t \ro^\srtwo + \bD_x \mean{v\g^\srtwo} & = - \sigma_a \ro^\srtwo + \SAT^\srtwo_{\scriptscriptstyle \ro, \mathsf{I1}} + \SAT^\srtwo_{\scriptscriptstyle \ro, \mathsf{I2}}, \\
    && \partial_t \g_k^\srtwo + \tfrac{v_k}{\varepsilon} \bD_x \g_k^\srtwo - \tfrac{1}{\varepsilon} \mean{v\bD_x \g^\srtwo} + \tfrac{v_k}{\varepsilon^2} \bD_x \ro^\srtwo & = - \left( \tfrac{\sigma_s}{\varepsilon^2} + \sigma_a \right) \g_k^\srtwo + \SAT^\srtwo_{\scriptscriptstyle \g_k, \mathsf{I1}} + \SAT^\srtwo_{\scriptscriptstyle \g_k, \mathsf{I2}},\\ && k & =1,\ldots,n_v, \\ \\
    \text{Domain \rthree: } && \partial_t \ro^\srthree + \bD_x \mean{v\g^\srthree} & = - \sigma_a \ro^\srthree + \SAT^\srthree_{\scriptscriptstyle \ro, \mathsf{I1}} + \SAT^\srthree_{\scriptscriptstyle \ro, \mathsf{I2}}, \\
    && \partial_t \g_k^\srthree + \tfrac{v_k}{\varepsilon} \bD_x \g_k^\srthree - \tfrac{1}{\varepsilon} \mean{v\bD_x \g^\srthree} + \tfrac{v_k}{\varepsilon^2} \bD_x \ro^\srthree & = - \left( \tfrac{\sigma_s}{\varepsilon^2} + \sigma_a \right) \g_k^\srthree + \SAT^\srthree_{\scriptscriptstyle \g_k, \mathsf{I1}} + \SAT^\srthree_{\scriptscriptstyle \g_k, \mathsf{I2}},\\ && k & =1,\ldots,n_v,
\end{align*}
where the symmetric interface SATs and the periodic boundary SATs are given by the following terms.

Domain \rone:
 
\begin{align*}
 \bSAT^\srone_{\scriptscriptstyle \ro, \mathsf{BC}} & = - \tfrac{1}{2} \bH_x^{-1} \bt_\sL \left( \bt_\sL^\top \mean{v\g^\srone} - \bt_\sR^\top \mean{v\g^\srtwo} \right), \\ 
 \bSAT^\srone_{\scriptscriptstyle \ro, \mathsf{I}} & = \tfrac{1}{2} \H^{-1} \bt_\sR \left( \bt_\sR^\top \mean{v\g^\srone} - \bt_\sL^\top \mean{v\g^\srtwo} \right), \\ 
    \\
\bSAT^\srone_{\scriptscriptstyle \g_k, \mathsf{BC}} & = -\tfrac{1}{2\varepsilon} \bH_x^{-1} \bt_\sL \left( \bt_\sL^\top (\tfrac{v_k}{\eps} \ro^\srone + v_k\g_k^\srone - \mean{v\g^\srone})- \bt_\sR^\top (\tfrac{v_k}{\eps} \ro^\srtwo + v_k\g_k^\srtwo - \mean{v\g^\srtwo}) \right),\\
 \bSAT^\srone_{\scriptscriptstyle \g_k, \mathsf{I}} & = \tfrac{1}{2\varepsilon} \bH_x^{-1} \bt_\sR \left( \bt_\sR^\top (\tfrac{v_k}{\eps} \ro^\srone + v_k\g_k^\srone - \mean{v\g^\srone})- \bt_\sL^\top (\tfrac{v_k}{\eps} \ro^\srtwo + v_k\g_k^\srtwo - \mean{v\g^\srtwo}) \right), \\
 k  &= 1,\ldots, n_v,
\end{align*}

Domain \rtwo:
    
\begin{align*}
\bSAT^\srtwo_{\sss \ro, \mathsf{I1}} & = -\tfrac{1}{2} \bH_x^{-1} \bt_\sL \left( \bt_\sL^\top \mean{v\g^\srtwo} - \bt_\sR^\top \mean{v\g^\srone} \right), \\
    \bSAT^\srtwo_{\sss \ro, \mathsf{I2}} & = \tfrac{1}{2} \bH_x^{-1} \t_\sR \left( \bt_\sR^\top \mean{v\g^\srtwo} - \bt_\sL^\top \mean{v\g^\srthree} \right), \\
    \\
    \bSAT^\srtwo_{\sss \g_k, \mathsf{I1}} & = -\tfrac{1}{2\varepsilon} \bH_x^{-1} \bt_\sL^\top \left( \bt_\sL^\top (\tfrac{v_k}{\eps} \ro^\srtwo + v_k\g_k^\srtwo - \mean{v\g^\srtwo})- \bt_\sR^\top (\tfrac{v_k}{\eps} \ro^\srone + v_k\g_k^\srone - \mean{v\g^\srone}) \right), \\
    \bSAT^\srtwo_{\sss \g_k, \mathsf{I2}} & = \tfrac{1}{2\varepsilon} \bH_x^{-1} \bt_\sR^\top \left( \bt_\sR^\top (\tfrac{v_k}{\eps} \ro^\srtwo + v_k\g_k^\srtwo - \mean{v\g^\srtwo})- \bt_\sL^\top (\tfrac{v_k}{\eps} \ro^\srthree + v_k\g_k^\srthree - \mean{v\g^\srthree}) \right), \\ 
     k &= 1,\ldots, n_v,
\end{align*}

Domain \rthree:

\begin{align*}
  \bSAT^\srthree_{\sss \ro, \mathsf{I}} & = -\tfrac{1}{2} \bH_x^{-1} \bt_\sL \left( \bt_\sL^\top \mean{v\g^\srthree} - \bt_\sR^\top \mean{v\g^\srtwo} \right), \\
    \bSAT^\srthree_{\sss \ro, \mathsf{BC}} & = \tfrac{1}{2} \bH_x^{-1} \bt_\sR \left( \bt_\sR^\top \mean{v\g^\srtwo} - \bt_\sL^\top \mean{v\g^\srone} \right), \\
    \\
     \bSAT^\srthree_{\sss \g_k, \mathsf{I}} & = -\tfrac{1}{2\varepsilon} \bH_x^{-1} \bt_\sL^\top \left( \bt_\sL^\top (\tfrac{v_k}{\eps} \ro^\srthree + v_k\g_k^\srthree - \mean{v\g^\srthree})- \bt_\sR^\top (\tfrac{v_k}{\eps} \ro^\srtwo + v_k\g_k^\srtwo - \mean{v\g^\srtwo}) \right), \\
    \bSAT^\srthree_{\sss \g_k, \mathsf{BC}} & = \tfrac{1}{2\varepsilon} \bH_x^{-1} \bt_\sR^\top \left( \bt_\sR^\top (\tfrac{v_k}{\eps} \ro^\srtwo + v_k\g_k^\srtwo - \mean{v\g^\srtwo})- \bt_\sL^\top (\tfrac{v_k}{\eps} \ro^\srone + v_k\g_k^\srone - \mean{v\g^\srone}) \right), \\
    k &= 1,\ldots, n_v.
\end{align*}
By defining the global matrices $\bD_x^\sG \coloneqq \I_3 \otimes \bD_x$, where $\I_3$ denotes the $3\times 3$ identity matrix,
\begin{align*}
    \bSAT^\sG \coloneqq \begin{bmatrix}
        \tfrac12 \bH_x^{-1} (\bt_\sR \bt_\sR^\top - \bt_\sL \bt_\sL^\top) & - \tfrac12 \bH_x^{-1} \bt_\sR \bt_\sL^\top & \tfrac12 \bH_x^{-1} \bt_\sL \bt_\sR^\top \\
        \tfrac12 \bH_x^{-1} \bt_\sL \bt_\sR^\top & \tfrac{1}{2} \bH_x^{-1} (\bt_\sR \bt_\sR^\top - \bt_\sL \bt_\sL^\top) & - \tfrac12 \bH_x^{-1} \bt_\sR \bt_\sL^\top \\
        - \tfrac12 \bH_x^{-1} \bt_\sR \bt_\sL^\top & \tfrac12 \bH_x^{-1} \bt_\sL \bt_\sR^\top & \tfrac12 \bH_x^{-1} (\bt_\sR \bt_\sR^\top - \bt_\sL \bt_\sL^\top)
    \end{bmatrix},
\end{align*}
and the solution vector $\ro^\sG \coloneqq \begin{bmatrix} (\ro^\srone)^\top, (\ro^\srtwo)^\top, (\ro^\srthree)^\top  \end{bmatrix}^\top$ (and similarly for the other variables), the global semi-discrete scheme above can be written more compactly as
\begin{align*}
    \partial_t \ro^\sG + \bD_x^\sG \mean{v\g^\sG} & = -\sigma_a \ro^\sG + \bSAT^\sG \mean{v\g^\sG}, \\
    \partial_t \g_k^\sG + \tfrac{v_k}{\eps} \bD_x^\sG \g_k^\sG - \tfrac{1}{\eps} \mean{v\bD_x^\sG \g^\sG} + \tfrac{v_k}{\eps^2} \bD_x^\sG \ro^\sG & = - \left( \tfrac{\sigma_s}{\eps^2} + \sigma_a \right) \g_k^\sG + \tfrac{1}{\eps}\bSAT^\sG (\tfrac{v_k}{\eps} \ro^\sG + v_k\g_k^\sG - \mean{v\g^\sG}), \\ k &=1,\ldots,n_v.
\end{align*}

Similarly as in the one-element case, the spatial SATs can be incorporated into the spatial SBP operators. For this purpose, we define $\tD_x = \It \otimes \tilde{\bD}^\sG_x$, where $\tilde{\bD}^\sG_x$ is the spatial operator with the SATs incorporated (see Appendix \ref{app:proofs}), $\D_t = \bD_t \otimes \I_{3} \otimes \Ix$, $\H_t^{-1} = \bH_t^{-1} \otimes \I_{3} \otimes \Ix$, and $\t_\sB = \bt_\sB \otimes \I_3 \otimes \Ix$. Moreover, the solution vectors take the forms $\ro^\top = \begin{bmatrix}
    \ro_0^\top, \ro_1^\top, \ldots, \ro_{n_t}^\top
\end{bmatrix}$ where
\[\ro_i^\top = \begin{bmatrix} \ro^\srone_{i0}, \ro^\srone_{i1}, \ldots, \ro^\srone_{in_x}, \ro^\srtwo_{i0}, \ro^\srtwo_{i1}, \ldots, \ro^\srtwo_{in_x}, \ro^\srthree_{i0}, \ro^\srthree_{i1}, \ldots, \ro^\srthree_{in_x} \end{bmatrix}.\] 
Then the scheme for the full space-time domain with multiple elements in space takes the much more concise form
%
%
\begin{subequations}\label{eq:multiel}
    \begin{align}
        \D_t \ro + \tD_x \mean{v\g} & = -\sigma_a \ro + \SAT_{\rho, 0}, \label{eq:multiel_rho}\\
        \D_t \g_k + \tfrac{v_k}{\eps} \tD_x \g_k - \tfrac{1}{\eps} \mean{v\tD_x \g} + \tfrac{v_k}{\eps^2} \tD_x \ro & = - \left( \tfrac{\sigma_s}{\eps^2} + \sigma_a \right) \g_k + \SAT_{g_k, 0}, \qquad k=1,\ldots,n_v, \label{eq:multiel_g}
    \end{align}
\end{subequations}
with
\begin{align*}
    \SAT_{\rho, 0} & = - \H_t^{-1} \t_\sB \t_\sB^\top (\ro - \ro(0)), \\
    \SAT_{g_k, 0} & = - \H_t^{-1} \t_\sB \t_\sB^\top (\g_k - \g_k(0)).
\end{align*}

\begin{theorem}\label{thm:multiel_meang}
    The discrete function $\g$ in \eqref{eq:multiel} satisfies $\mean{\g} = 0$.
\end{theorem}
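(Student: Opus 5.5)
The plan is to mirror the proof of Theorem~\ref{thm:disc_g}, since the multi-element scheme \eqref{eq:multiel} has exactly the same algebraic structure as \eqref{eq:scheme_single}. The only difference is that $\tD_x$, $\D_t$, $\H_t$ and $\t_\sB$ now carry the additional factor $\I_3$ for the three elements. First, we multiply each of the $n_v$ equations \eqref{eq:multiel_g} by the quadrature weight $\omega_k$ and sum over $k=1,\ldots,n_v$. Because $\tD_x$ is a fixed linear operator that does not depend on $k$, averaging commutes with it, so $\mean{v_k\tD_x\g_k}=\mean{v\tD_x\g}$. The second and third terms on the left therefore cancel, using $\mean{1}=1$ for the third term. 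The term $\tfrac{\mean{v}}{\eps^2}\tD_x\ro$ vanishes since $\mean{v}=0$ also holds for the quadrature. The interface and periodic SATs have been absorbed into $\tD_x$, so they cancel together with the transport terms and need no separate treatment.

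This leaves
\begin{align*}
\D_t\mean{\g} = -\left(\tfrac{\sigma_s}{\eps^2}+\sigma_a\right)\mean{\g} - \H_t^{-1}\t_\sB\t_\sB^\top\left(\mean{\g}-\mean{\g(0)}\right).
\end{align*}
With the assumption $\mean{\g(0)}=0$ and $\D_t=\H_t^{-1}\Q_t$, where now $\Q_t=\bQ_t\otimes\I_3\otimes\Ix$ (equivalently $\H^{-1}(\bQ_t\otimes\I_3\otimes\bH_x)$ after scaling), this becomes the eigenvalue-type relation
\begin{align*}
\H_t^{-1}\left(\Q_t+\t_\sB\t_\sB^\top\right)\mean{\g} = -\left(\tfrac{\sigma_s}{\eps^2}+\sigma_a\right)\mean{\g}.
\end{align*}
If $\sigma_s,\sigma_a$ vary in space, the right-hand side is a diagonal matrix acting on $\mean{\g}$ rather than a scalar multiple. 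I would handle this by moving it to the left, which gives $\H_t^{-1}\bigl(\Q_t+\t_\sB\t_\sB^\top+\H_t\Sigma\bigr)\mean{\g}=0$ with $\Sigma$ the positive diagonal matrix of $\tfrac{\sigma_s}{\eps^2}+\sigma_a$. Since all these matrices are diagonal in space, $\H_t\Sigma$ is symmetric positive definite.

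The final step is to show that the matrix $A=\Q_t+\t_\sB\t_\sB^\top(+\H_t\Sigma)$ is nonsingular, which forces $\mean{\g}=0$. Its symmetric part is $\tfrac12(\bE_t\otimes\I_3\otimes\Ix)+\t_\sB\t_\sB^\top=\tfrac12(\bt_\sT\bt_\sT^\top+\bt_\sB\bt_\sB^\top)\otimes\I_3\otimes\Ix\succeq0$, using $\bQ_t+\bQ_t^\top=\bE_t$. Adding $\H_t\Sigma\succ0$ makes it positive definite. Hence $x^\top Ax>0$ for all real $x\neq0$, so $A$ is invertible. Equivalently, Lemma~\ref{lem:EV} shows that all eigenvalues of $\H_t^{-1}(\Q_t+\t_\sB\t_\sB^\top)$ have nonnegative real part, so the negative number $-(\sigma_s/\eps^2+\sigma_a)$ cannot be one of them.

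The main obstacle is bookkeeping rather than analysis. One must check carefully that the averaged spatial terms cancel exactly in the global operator. This holds because the global interface SATs were applied to the same combination $\tfrac{v_k}{\eps}\ro+v_k\g_k-\mean{v\g}$ and folded into $\tD_x$, and the average of that combination over $k$ is $\tfrac{\mean{v}}{\eps}\ro$, which vanishes. One must also check that the Kronecker structure with $\I_3$ preserves the SBP identity $\Q_t+\Q_t^\top=\E_t$ used above.
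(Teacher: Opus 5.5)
Your argument is correct. It follows the paper's reduction step for step: weighted averaging of the micro equations, cancellation of the transport terms (with the interface SATs absorbed into $\tD_x$), and use of $\mean{v}=0$ and $\mean{\g(0)}=0$. Both arrive at $\H_t^{-1}(\Q_t+\t_\sB\t_\sB^\top)\mean{\g}=-(\tfrac{\sigma_s}{\eps^2}+\sigma_a)\mean{\g}$.

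The one genuine difference is the last step. The paper reads this relation as an eigenvalue problem and uses Lemma~\ref{lem:EV} to exclude the negative number $-(\sigma_s/\eps^2+\sigma_a)$ from the spectrum of $\H_t^{-1}(\Q_t+\t_\sB\t_\sB^\top)$. Taken literally, that only works for constant coefficients. The model in Section~\ref{sec:continuous_problem} allows $\sigma_s(x)$ and $\sigma_a(x)$ to vary in space, and then the right-hand side is a diagonal matrix, not a scalar.

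Your coercivity argument avoids this. You show that the symmetric part of $\Q_t+\t_\sB\t_\sB^\top+\H_t\Sigma$ equals $\tfrac12(\bt_\sT\bt_\sT^\top+\bt_\sB\bt_\sB^\top)\otimes\I_3\otimes\Ix+\H_t\Sigma\succ 0$, with $\Sigma$ the diagonal matrix of nodal values of $\sigma_s/\eps^2+\sigma_a$, so the matrix is nonsingular. This handles variable coefficients at no extra cost and does not need Lemma~\ref{lem:EV} at all.

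The paper's phrasing has the merit of reusing the eigenvalue framework of the single-element proof verbatim. It can be repaired within that framework by applying Lemma~\ref{lem:EV} with $B=\H_t\Sigma$, so that $-1$ cannot be an eigenvalue of $(\H_t\Sigma)^{-1}(\Q_t+\t_\sB\t_\sB^\top)$. Your direct version is still the cleaner and more general of the two.
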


\begin{proof}
    The proof is analogous to the proof of Theorem \ref{thm:disc_g} and is included in Appendix \ref{app:proofs}. 
\end{proof}

\begin{theorem}\label{thm:multiel_stability}
    The scheme \eqref{eq:multiel} is stable.
\end{theorem}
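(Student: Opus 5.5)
The plan is to reduce Theorem~\ref{thm:multiel_stability} to the single-element argument of Theorem~\ref{thm:scheme_single}. The only ingredient in that proof that depends on the spatial discretization is the skew-symmetry relation $\H\tD_x+\tD_x^\top\H=0$. So the key step is to show that the global spatial operator $\tilde{\bD}^\sG_x$, with the interface and periodic SATs built in, is skew-adjoint with respect to the global norm $\bH^\sG_x=\I_3\otimes\bH_x$.

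\emph{Step 1: skew-symmetry of the global operator.} Write $\tilde{\bD}^\sG_x=(\I_3\otimes\bH_x^{-1})\tilde{\bQ}^\sG_x$ with
\[
\tilde{\bQ}^\sG_x=\I_3\otimes\bQ_x-(\I_3\otimes\bH_x)\bSAT^\sG .
\]
Insert $\bQ_x=\bS_x+\tfrac12\bE_x$ and $\bE_x=\bt_\sR\bt_\sR^\top-\bt_\sL\bt_\sL^\top$.
\begin{itemize}
\item The diagonal blocks of $(\I_3\otimes\bH_x)\bSAT^\sG$ equal $\tfrac12\bE_x$. They therefore cancel the symmetric part of each $\bQ_x$ and leave $\bS_x$.
\item The off-diagonal blocks come in transposed pairs with opposite signs. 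For example, block $(1,2)$ is $\tfrac12\bt_\sR\bt_\sL^\top$ and block $(2,1)$ is $-\tfrac12\bt_\sL\bt_\sR^\top$. The periodic wrap-around blocks $(1,3)$ and $(3,1)$ pair up in the same way.
\end{itemize}
Hence $\tilde{\bQ}^\sG_x+(\tilde{\bQ}^\sG_x)^\top=0$. Tensoring with $\bH_t$ gives $\H\tD_x+\tD_x^\top\H=\bH_t\otimes(\tilde{\bQ}^\sG_x+(\tilde{\bQ}^\sG_x)^{\top})=0$, where $\H=\bH_t\otimes\I_3\otimes\bH_x$.

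I expect this block-by-block bookkeeping to be the main obstacle. The signs and transposes of the SAT entries have to be checked carefully, since several displayed SATs carry typographical inconsistencies such as $\bt^\top$ versus $\bt$. I would verify the pairing directly from the matrix $\bSAT^\sG$ as stated. I would also confirm that the same $\bSAT^\sG$ multiplies both $\mean{v\g^\sG}$ and $\tfrac{v_k}{\eps}\ro^\sG+v_k\g^\sG_k-\mean{v\g^\sG}$. Because it does, every spatial term in \eqref{eq:multiel} is indeed expressed through $\tD_x$.

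\emph{Step 2: energy identity.} Multiply \eqref{eq:multiel_rho} by $\ro^\top\H$ and \eqref{eq:multiel_g} by $\eps^2\omega_k\g_k^\top\H$, then sum over $k$. Using Step~1, two groups of terms vanish:
\begin{itemize}
\item the cross terms $\ro^\top\H\tD_x\mean{v\g}+\mean{v\g}^\top\H\tD_x\ro$;
\item the terms $\mean{v\g^\top\H\tD_x\g}$.
\end{itemize}
Theorem~\ref{thm:multiel_meang} gives $\mean{\g}=0$, which removes the coupling term $\eps\mean{\g}^\top\H\mean{v\tD_x\g}$. What remains is exactly the analogue of \eqref{eq:semidiscrete_stability}, with $\Q_t=\bQ_t\otimes\I_3\otimes\bH_x$.

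\emph{Step 3: temporal SATs.} Use $\Q_t+\Q_t^\top=(\bt_\sT\bt_\sT^\top-\bt_\sB\bt_\sB^\top)\otimes\I_3\otimes\bH_x$ and insert $\SAT_{\rho,0}$ and $\SAT_{g_k,0}$. Then complete the squares exactly as in \eqref{eq:stability_estimate}. This bounds the final-time energy
\[
\tfrac12\ro^\top(\bt_\sT\bt_\sT^\top\otimes\I_3\otimes\bH_x)\ro+\tfrac{\eps^2}{2}\mean{\g^\top(\bt_\sT\bt_\sT^\top\otimes\I_3\otimes\bH_x)\g}
\]
by the non-positive dissipation terms plus the initial-data terms, which proves stability.
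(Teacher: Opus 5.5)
Your proposal is correct and takes the same route as the paper's appendix proof. You fold the interface and periodic SATs into $\tilde{\bQ}^\sG_x$, check its skew-symmetry block by block (your sign pairing of the off-diagonal blocks matches the paper's displayed matrix), and then rerun the energy argument of Theorem~\ref{thm:scheme_single}, using $\mean{\g}=0$ from Theorem~\ref{thm:multiel_meang} and the same completion of squares for the temporal SATs. The paper instead just observes that the energy identity has the same form as \eqref{eq:single_stability} and defers to the single-element proof, so your version is the more explicit of the two.
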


\begin{proof}
    Stability follows from the same steps as in the proof of Theorem \ref{thm:scheme_single}, after realising that the new operator, $\tilde{\tQ}_x$, is also skew-symmetric. The proof in its entirety is given in Appendix \ref{app:proofs}. 
\end{proof}

\begin{theorem}\label{thm:multiel_asymptotic}
    The scheme \eqref{eq:multiel} is asymptotic preserving.
\end{theorem}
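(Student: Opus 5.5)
The argument follows the single-element proof of Theorem~\ref{thm:asymptotic}; the only new ingredient is the global operator $\tD_x = \It \otimes \tilde{\bD}^\sG_x$. First, multiply \eqref{eq:multiel_g} by $\eps^2$ and let $\eps \to 0$, assuming as before that the discrete derivatives of $\ro$ and $\g_k$ stay bounded. The terms $\eps^2\D_t\g_k$, $\eps v_k\tD_x\g_k$, $\eps\mean{v\tD_x\g}$, $\eps^2\sigma_a\g_k$ and $\eps^2\SAT_{g_k,0}$ then vanish, and we are left with
\begin{align*}
v_k\tD_x\ro = -\sigma_s\g_k,\qquad k=1,\ldots,n_v .
\end{align*}
This holds on every element simultaneously, because the interface and periodic coupling SATs are already built into $\tD_x$. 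Equation \eqref{eq:multiel_rho} contains no $\eps$ and therefore stays unchanged.

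Next, substitute $\g_k = -\tfrac{v_k}{\sigma_s}\tD_x\ro$ into $\mean{v\g}$. This gives $\mean{v\g} = -\tfrac{\mean{v^2}}{\sigma_s}\tD_x\ro$, where $\sigma_s$ is understood as a diagonal matrix of nodal values. The limit scheme becomes
\begin{align*}
\D_t\ro = \tD_x\left(\tfrac{\mean{v^2}}{\sigma_s}\tD_x\ro\right) - \sigma_a\ro + \SAT_{\rho,0}.
\end{align*}
As a side remark, $\mean{\g}=\mean{v}(\cdot)=0$ in this limit, which is consistent with Theorem~\ref{thm:multiel_meang}. Consistency with the diffusion–reaction limit \eqref{eq:heat} follows because $\tD_x$ is a consistent multi-element first-derivative approximation. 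Away from element interfaces it reduces to $\bD_x$, and the symmetric interface SATs are consistent, since they vanish on smooth functions that are continuous across interfaces. Composing two consistent first-derivative operators gives a consistent approximation of $(\mean{v^2}\sigma_s^{-1}\rho_x)_x$.

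For stability of the limit scheme, multiply by $\ro^\top\H$ and use the skew-symmetry $\H\tD_x + \tD_x^\top\H = 0$. This was established for $\tilde{\tQ}_x$ in the proof of Theorem~\ref{thm:multiel_stability}, and it also uses that $\H$ commutes with the diagonal $\sigma_s$. The diffusion term becomes $-(\tD_x\ro)^\top\H\tfrac{\mean{v^2}}{\sigma_s}\tD_x\ro \le 0$. The temporal SAT term is handled exactly as in \eqref{eq:stability_estimate}, which yields a bound by the initial data $\tfrac12\ro(0)^\top\t_\sB\t_\sB^\top\ro(0)$.

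The main obstacle is verifying skew-symmetry of the assembled global operator, i.e. that $(\I_3\otimes\bH_x)\tilde{\bD}^\sG_x$ is skew-symmetric given the block structure of $\bSAT^\sG$. The diagonal blocks cancel $\tfrac12\bE_x$, and the off-diagonal blocks $\mp\tfrac12\bt_\sR\bt_\sL^\top$, $\pm\tfrac12\bt_\sL\bt_\sR^\top$ pair up as negative transposes. Since this is exactly what the appendix proof of Theorem~\ref{thm:multiel_stability} provides, I will cite it rather than redo it.
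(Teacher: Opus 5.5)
Your proposal is correct and follows the paper's route. The paper simply notes that the particular form of $\tD_x$ plays no role and reuses the proof of Theorem~\ref{thm:asymptotic}: the formal limit $v_k\tD_x\ro=-\sigma_s\g_k$, substitution into the macro equation, and an energy estimate based on $\H\tD_x+\tD_x^\top\H=0$. You additionally make explicit two facts that argument relies on, namely the skew-symmetry of the assembled $\tilde{\bQ}^\sG_x$ and the commutation of the diagonal $\H$ with $\sigma_s$.
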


\begin{proof}
    Since the specific form of the spatial differentiation operator $\tD_x$ does not affect the proof of asymptotic preservation, the proof is equivalent to that of Theorem \ref{thm:asymptotic}.
\end{proof}

\subsection{Multi-slab temporal domain} \label{sec:multislab}

Finally, we extend the space-time discretization to a space-time domain with multiple elements in space and multiple time-slabs, as depicted in figure~\ref{fig:multimulti}. For simplicity, we consider only two time-slabs in the formulation of the scheme and the stability proof, but the results are valid for arbitrary numbers of time-slabs.

\begin{figure}[h!]
    \centering
    \includegraphics[width=.7\textwidth]{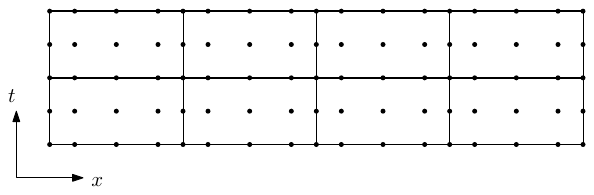}
    \caption{Example grid with multiple elements in space and two slabs in time.}
    \label{fig:multimulti}
\end{figure}

The scheme for each of the two time-slabs can be stated as 
\begin{equation}\label{eq:multislab}
    \begin{aligned}
        \text{Slab \rone: } \hspace{2em}&& \D_t \ro^\srone + \tD_x \mean{v\g^\srone} & = - \sigma_a \ro^\srone + \SAT^\srone_{\scriptscriptstyle \ro, 0}, \\
        && \D_t \g_k^\srone + \tfrac{v_k}{\varepsilon} \tD_x \g_k^\srone - \tfrac{1}{\varepsilon} \mean{v\tD_x \g^\srone} + \tfrac{v_k}{\varepsilon^2} \tD_x \ro^\srone &   = - \left( \tfrac{\sigma_s}{\varepsilon^2} + \sigma_a \right) \g_k^\srone + \SAT^\srone_{\scriptscriptstyle \g_k, 0}, \\ && k & =1,\ldots,n_v,\\ \\
    \text{Slab \rtwo: } \hspace{2em}&& \D_t \ro^\srtwo + \tD_x \mean{v\g^\srtwo} & = - \sigma_a \ro^\srtwo + \SAT^\srtwo_{\scriptscriptstyle \ro, \mathsf{I}}, \\
        && \D_t \g_k^\srtwo + \tfrac{v_k}{\varepsilon} \tD_x \g_k^\srtwo - \tfrac{1}{\varepsilon} \mean{v\tD_x \g^\srtwo} + \tfrac{v_k}{\varepsilon^2} \tD_x \ro^\srtwo &   = - \left( \tfrac{\sigma_s}{\varepsilon^2} + \sigma_a \right) \g_k^\srtwo + \SAT^\srtwo_{\scriptscriptstyle \g_k, \mathsf{I}},\\ && k & =1,\ldots,n_v, \\ \\
    \end{aligned}
\end{equation}

where
\begin{equation}
    \begin{aligned}\label{eq:multislab_SAT}
        \text{Slab \rone: } \hspace{2em}&& \SAT^\srone_{\sss \ro, 0} & = -\H_t^{-1} \t_\sB \t_\sB^\top (\ro^\srone - \ro^\srone(0)), \\ 
        && \SAT^\srone_{\sss \g_k, 0} & = -\H_t^{-1} \t_\sB \t_\sB^\top (\g^\srone_k - \g^\srone_k(0)), && k  =1,\ldots,n_v,\\ \\
        \text{Slab \rtwo: } \hspace{2em}&& \SAT^\srtwo_{\sss \ro, \mathsf{I}} & = -\H_t^{-1} \t_\sB  (\t_\sB^\top \ro^\srtwo - \t_\sT^\top \ro^\srone), \\ 
        && \SAT^\srtwo_{\sss \g_k, \mathsf{I}} & = -\H_t^{-1} \t_\sB (\t_\sB^\top \g^\srtwo_k - \t_\sT^\top \g^\srone_k), && k  =1,\ldots,n_v.\\ \\
    \end{aligned}
\end{equation}

The specific form of the SATs in \eqref{eq:multislab_SAT} means that the computed solutions at the top boundary of the first time slab, i.e. $\t_\sT^\top\ro^\srone$ and $\t_\sT^\top\g^\srone_k$, act as weakly imposed initial conditions for the second time slab. Consequently, a generalization to an arbitrary number of time slabs based on successive time stepping from the previous time slab to the next one is straightforward.

\begin{theorem}\label{thm:multislab_meang}
    The discrete function $\g$ satisfies $\mean{\g} = 0$.
\end{theorem}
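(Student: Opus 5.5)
The plan is to proceed slab by slab, reducing each slab to the single-slab argument of Theorem~\ref{thm:disc_g} (equivalently Theorem~\ref{thm:multiel_meang}). First I average the micro equations of slab~\rone\ in \eqref{eq:multislab} in velocity space: multiply each equation by $\omega_k$ and sum over $k$. As before, the terms $\tfrac{1}{\eps}\mean{v\tD_x\g^\srone}-\tfrac1\eps\mean{v\tD_x\g^\srone}$ cancel, and the term $\tfrac{\mean{v}}{\eps^2}\tD_x\ro^\srone$ vanishes because $\mean{v}=0$. Averaging the initial SAT and assuming $\mean{\g^\srone(0)}=0$ gives
\begin{align*}
\H_t^{-1}\left(\Q_t+\t_\sB\t_\sB^\top\right)\mean{\g^\srone} = -\left(\tfrac{\sigma_s}{\eps^2}+\sigma_a\right)\mean{\g^\srone}.
\end{align*}
This is exactly \eqref{eq:mean_g}. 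The eigenvalue argument via Lemma~\ref{lem:EV} then applies verbatim: the symmetric part of $\Q_t+\t_\sB\t_\sB^\top$ is $\tfrac12(\t_\sT\t_\sT^\top+\t_\sB\t_\sB^\top)\succeq0$, so $-(\sigma_s/\eps^2+\sigma_a)<0$ is not an eigenvalue. Hence $\mean{\g^\srone}=0$.

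Next I treat slab~\rtwo. Averaging its micro equations in the same way, with the interface SAT $\SAT^\srtwo_{\sss\g_k,\mathsf I}$, gives
\begin{align*}
\H_t^{-1}\left(\Q_t+\t_\sB\t_\sB^\top\right)\mean{\g^\srtwo} = -\left(\tfrac{\sigma_s}{\eps^2}+\sigma_a\right)\mean{\g^\srtwo} + \H_t^{-1}\t_\sB\t_\sT^\top\mean{\g^\srone}.
\end{align*}
The SAT is linear in $\g_k$, so the average passes through it. By the first step, $\t_\sT^\top\mean{\g^\srone}=0$, so the forcing term drops out and the equation again has the homogeneous form \eqref{eq:mean_g}. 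The same application of Lemma~\ref{lem:EV} yields $\mean{\g^\srtwo}=0$.

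For an arbitrary number of slabs I would argue by induction on the slab index. The top-boundary values of slab $j$ act as weakly imposed initial data for slab $j+1$, so the hypothesis $\mean{\g^{(j)}}=0$ makes the inhomogeneity in slab $j+1$ vanish. The multi-element spatial structure enters only through $\tD_x$, which cancels identically in the averaged equation, so it plays no role.

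The only delicate point is the coupling through the interface SAT. If the slabs are solved as one monolithic system rather than sequentially, I would instead write the global averaged system. It is block lower-triangular, with diagonal blocks $\H_t^{-1}(\Q_t+\t_\sB\t_\sB^\top)+(\sigma_s/\eps^2+\sigma_a)\I$, each of which is invertible by the argument above. So the global system has only the trivial solution, and the conclusion is the same.
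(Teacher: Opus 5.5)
Your proposal is correct and follows essentially the same route as the paper. You average the micro equations slab by slab and use $\mean{\g^\srone}=0$ to remove the interface forcing $\H_t^{-1}\t_\sB\t_\sT^\top\mean{\g^\srone}$, which reduces slab \rtwo\ to the homogeneous eigenvalue problem \eqref{eq:mean_g} handled by Lemma~\ref{lem:EV}; your block lower-triangular remark for the monolithic system is just an equivalent way of stating the same induction over slabs.
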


\begin{proof}
    Since $\mean{\g^\srone} = 0$ by Theorem \ref{thm:multiel_meang}, the proof is immediate by considering each time slab sequentially and using $\mean{\g} = 0$ at the previous time slab as the initial condition. The full proof is given in Appendix~\ref{app:proofs}. 
\end{proof}

\begin{theorem}\label{thm:multislab_stab}
    The scheme \eqref{eq:multislab} with \eqref{eq:multislab_SAT} is stable.
\end{theorem}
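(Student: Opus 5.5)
The plan is to apply the energy method of Theorem~\ref{thm:scheme_single} (and of Theorem~\ref{thm:multiel_stability}) to each time slab separately, and then chain the two slab estimates together through the interface SAT. The key point is that the slab-II interface SAT has the same algebraic form as the initial-condition SAT $\SAT_{\sss \rho,0}$. The only change is that the data $\ro(0)$ is replaced by the computed top trace $\t_\sT^\top \ro^\srone$, and likewise for $\g_k$. Hence the slab-II estimate will be bounded by the slab-I final energy, and the slab-I final energy is in turn bounded by the initial data.

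\textbf{Step 1 (slab I).} This is exactly scheme \eqref{eq:multiel}, so the estimate from the proof of Theorem~\ref{thm:multiel_stability} applies verbatim. Its analogue of \eqref{eq:stability_estimate} reads
\begin{align*}
\tfrac12 \ro^{\srone\top}(\bt_\sT\bt_\sT^\top\otimes \bH_x)\ro^\srone + \tfrac{\eps^2}{2}\mean{\g^{\srone\top}(\bt_\sT\bt_\sT^\top\otimes \bH_x)\g^\srone} \le \tfrac12\ro(0)^\top(\bt_\sB\bt_\sB^\top\otimes\bH_x)\ro(0)+\tfrac{\eps^2}{2}\mean{\g(0)^\top(\bt_\sB\bt_\sB^\top\otimes\bH_x)\g(0)},
\end{align*}
together with the dissipative terms $-\sigma_a\norm{\ro^\srone}^2_\H-(\sigma_s+\eps^2\sigma_a)\vvvert\g^\srone\vvvert^2_\H$.

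\textbf{Step 2 (slab II).} I multiply the $\rho$-equation by $\ro^{\srtwo\top}\H$ and the $g_k$-equations by $\eps^2\omega_k\g_k^{\srtwo\top}\H$, and then sum. The spatial terms cancel by the skew-symmetry of $\tilde{\Q}_x$ and by $\mean{\g^\srtwo}=0$ from Theorem~\ref{thm:multislab_meang}, exactly as before. This leaves
\begin{align*}
\tfrac12\ro^{\srtwo\top}(\bE_t\otimes\bH_x)\ro^\srtwo - \ro^{\srtwo\top}\t_\sB\bigl(\t_\sB^\top\ro^\srtwo-\t_\sT^\top\ro^\srone\bigr)
\end{align*}
plus the analogous $\g$ terms and the dissipative terms. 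Here I interpret $\t_\sB,\t_\sT$ with the appropriate spatial norm factor $\bH_x$ included after multiplication by $\H$, as in the single-slab computation. Completing the square with $a=\t_\sB^\top\ro^\srtwo$ and $b=\t_\sT^\top\ro^\srone$, the boundary contribution becomes
\begin{align*}
-\tfrac12 a^\top\bH_x a + a^\top\bH_x b = -\tfrac12(a-b)^\top\bH_x(a-b)+\tfrac12 b^\top\bH_x b .
\end{align*}
The result is that the top energy of slab II is at most the top energy of slab I, minus a nonpositive jump term and the dissipation.

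\textbf{Step 3 (chaining).} Inserting the slab-I bound from Step 1 into the slab-II bound from Step 2 yields an estimate on the final-time energy in terms of the initial data only. The dissipative terms of both slabs add up, which mirrors the continuous time integral in \eqref{eq:energy_estimate}. For an arbitrary number of slabs the same argument extends by induction.

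The main obstacle I expect is bookkeeping rather than substance. One must verify that $\mean{\g^\srtwo}=0$ holds, which is supplied by Theorem~\ref{thm:multislab_meang}, because the cancellation of the term $-\eps\mean{\g}^\top\H\mean{v\tD_x\g}$ depends on it. One must also track the Kronecker structure of $\t_\sB,\t_\sT$ together with $\H=\bH_t\otimes\I_3\otimes\bH_x$ carefully, so that the interface coupling terms appear with the weight $\bH_x$ and the completed square is legitimate.
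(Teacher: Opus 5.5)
Your proposal is correct and is essentially the paper's argument. The paper multiplies the stacked two-slab system by $\ro^\top(\I_2\otimes\H)$ and $\omega_k\eps^2\g_k^\top(\I_2\otimes\H)$ in one step, so the slab-I top energy combines with the interface coupling into the jump square within a single identity; you derive the two slab identities separately and add them, which gives exactly the same bound and also keeps the $\eps^2$-weighted jump term for $\g$ at the slab interface that the paper's final display drops.
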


\begin{proof}
    Similarly as before, the stability is found using analogous steps as in the proof of Theorem \ref{thm:multiel_stability}, with the natural extension to multiple time slabs. The proof in full is found in Appendix~\ref{app:proofs}.
\end{proof}

\begin{theorem}\label{thm:multislab_asymptotic}
    The scheme \eqref{eq:multislab} with \eqref{eq:multislab_SAT} is asymptotic preserving. 
\end{theorem}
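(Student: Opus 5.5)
The plan is to reduce the multi-slab case to the single-slab argument of Theorem~\ref{thm:asymptotic}, slab by slab. Within each slab, the limit $\eps\to0$ (with the derivative approximations assumed appropriately bounded, as before) is taken in the micro equations of \eqref{eq:multislab}. After multiplying by $\eps^2$, the terms $\eps^2\D_t\g_k$, $\eps v_k\tD_x\g_k$, $\eps\mean{v\tD_x\g}$, $\eps^2\sigma_a\g_k$ and the time SATs $\eps^2\SAT^\srone_{\sss\g_k,0}$, $\eps^2\SAT^\srtwo_{\sss\g_k,\mathsf I}$ all vanish. What remains is $v_k\tD_x\ro^{\srone} = -\sigma_s\g_k^{\srone}$ and $v_k\tD_x\ro^{\srtwo}=-\sigma_s\g_k^{\srtwo}$ for $k=1,\ldots,n_v$.

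Next, insert these relations into the macro equations. Averaging $v_k\g_k$ gives $\mean{v\g^{\sss J}} = -\tfrac{\mean{v^2}}{\sigma_s}\tD_x\ro^{\sss J}$ for $J=\rone,\rtwo$. This yields on slab~\rone{} exactly the limit scheme \eqref{eq:asymptotic_scheme_single} with the multi-element operator $\tD_x$, as in Theorem~\ref{thm:multiel_asymptotic}. On slab~\rtwo{} it yields
\[
\D_t\ro^\srtwo = \tD_x\left(\tfrac{\mean{v^2}}{\sigma_s}\tD_x\ro^\srtwo\right)-\sigma_a\ro^\srtwo - \H_t^{-1}\t_\sB(\t_\sB^\top\ro^\srtwo-\t_\sT^\top\ro^\srone).
\]
This is the same SBP-SAT discretization of the limit diffusion equation. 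Its weakly imposed initial datum is the final value of the first slab. Hence the pair of slabs is a consistent multi-slab approximation of $\rho_t=(\tfrac{\mean{v^2}}{\sigma_s}\rho_x)_x-\sigma_a\rho$.

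For stability of the limit scheme, the energy argument of Theorem~\ref{thm:asymptotic} is applied on each slab: multiply by $(\ro^{\sss J})^\top\H$ and use $\H\tD_x+\tD_x^\top\H=0$ (the skew-symmetry of the multi-element $\tilde{\tQ}_x$ from Theorem~\ref{thm:multiel_stability}). This gives that the top-boundary energy of slab $J$ is bounded by the diffusion and absorption terms, which are nonpositive, plus the SAT contribution. For slab~\rtwo{} the SAT contribution is $-\tfrac12\norm{\t_\sB^\top\ro^\srtwo-\t_\sT^\top\ro^\srone}^2+\tfrac12\norm{\t_\sT^\top\ro^\srone}^2$ in the $\bH_x$-weighted norm, by completing the square exactly as in \eqref{eq:stability_estimate}. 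Chaining the two estimates bounds the final energy by $\tfrac12\ro(0)^\top(\bt_\sB\bt_\sB^\top\otimes\bH_x)\ro(0)$. Induction over slabs extends this to any number of time slabs.

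The main obstacle is the bookkeeping at the slab interface. One must check that the interface SAT in the limit keeps its form and couples only $\ro$, which it does because the $\g$-interface SAT is multiplied by $\eps^2$ and disappears. One must also check that the completed-square terms telescope correctly. I would verify the latter by writing the sum of both slab energy identities and noting that the $+\tfrac12\norm{\t_\sT^\top\ro^\srone}^2$ term produced at the start of slab~\rtwo{} exactly cancels the energy at the top of slab~\rone.
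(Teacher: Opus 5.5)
Your proposal is correct and follows essentially the paper's proof. It uses the same $\eps\to0$ limit $v_k\tD_x\ro=-\sigma_s\g_k$, substitutes it into the macro equations to obtain the SBP-SAT diffusion scheme in which only $\ro$ is coupled across the slab interface, and closes with an energy estimate in which the interface SAT is completed to a square.

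The only difference is bookkeeping. The paper writes a single global identity in the compact $\I_2\otimes$ form, whereas you sum two slab-wise identities and telescope; this gives the same final bound. Your version also keeps the nonpositive diffusion term explicitly, which the paper's displayed identity silently omits.
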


\begin{proof}
    The proof is analogous to that of Theorem \ref{thm:asymptotic}, and the full proof is given in Appendix \ref{app:proofs}.
\end{proof}

\FloatBarrier    
\section{Numerical simulations}\label{sec:numexp}

\subsection{Convergence study for smooth periodic example}

For the following example, we use the method of manufactured solution to carry out a numerical convergence study for a smooth periodic test case.
For this purpose, the model \eqref{eq:eqns}, with constant absorption and scattering parameters defined as $\sigma_a(x)=0, \sigma_s(x)=1$ for all $x\in\Omega_x$, is modified to admit the periodic exact solution
\begin{subequations}\label{eq:exact_sol}
\begin{align}
    \rho^\varepsilon(x,t) & =  \frac1r e^{rt} \sin{x},\quad r = \frac{-2}{1 + \sqrt{1 - 4 \varepsilon^2}},\\
    g^\varepsilon(x,v,t) & = -v e^{rt}\cos{x},
\end{align}
\end{subequations}
 on the domain $\Omega_x=[-\pi,\pi]$, inspired by the exact solution found in Section 4.1 in \cite{JangLiQiuXiong15} for the corresponding two-velocity problem. The modification of the model \eqref{eq:eqns} consists in adding forcing functions to the right-hand side yielding the modified model
\begin{subequations}\label{eq:eqns_mms}
    \begin{align}
        \partial_t \rho + \partial_x \mean{vg} & = F_\rho, \\
        \partial_t g + \tfrac{1}{\varepsilon} v\partial_x g - \tfrac{1}{\varepsilon} \mean{v\partial_x g} + \tfrac{1}{\varepsilon^2} v \partial_x \rho & = - \tfrac{1}{\varepsilon^2} g + F_g,
    \end{align}
\end{subequations}
with
\begin{align*}
 F_\rho(x,t) &= (1-\mean{v^2})\, e^{rt}\sin{x},\\
 F_g(x,v,t) &= \frac1\varepsilon(\mean{v^2}-v^2)\, e^{rt}\sin{x}.
\end{align*}
The test case is equipped with intial conditions obtained from evaluating the exact solution \eqref{eq:exact_sol} at $t=0$ and with periodic boundary conditions.
The velocity space is discretized by $n_v=16$ Gauss-Lobatto nodes.

Numerical solutions for this problem are computed for $\varepsilon=0.5, 10^{-2},10^{-6}$ using a multi-element division of the spatial domain $\Omega_x$ into $K$ elements and $n_x+1=N$ Gauss-Lobatto nodes on each element for the spatial SBP operator $D_x$. The SBP operator in time is constructed on a multi-slab temporal domain with the same number $K$ of time-slabs also using $n_t+1=N$ Gauss-Lobatto nodes. We will denote the numerical solutions for specific choices of $K$ and $N$ by $\ro^{K,N}$ and $\g^{K,N}$ and their corresponding space-time representations by $\rho^{K,N}(x,t)$ and $g_k^{K,N}(x,t),\ k=1,\ldots,n_v$.

The error of the numerical solution is computed at time $t=1$ using the known exact solution~\eqref{eq:exact_sol} on the corresponding space-time grid, where we use $g_k^\varepsilon(x,t) = g^\varepsilon(x,v_k,t),\ k=1,\ldots,n_v$. The errors are computed as
\begin{align*}
err^{K,N}_\rho = \|\rho^{K,N}(x,t)-\rho^\varepsilon(x,t)\|_{L^{\infty}(\Omega_x)},\\
err^{K,N}_g = \max_{k=1,\ldots,n_v}\|g_k^{K,N}(x,t)-g_k^{\varepsilon}(x,t)\|_{L^{\infty}(\Omega_x)}.
\end{align*}

Tables \ref{tab:convergenceN2}, \ref{tab:convergenceN3}, \ref{tab:convergenceN5} and \ref{tab:convergenceN7} show the numerical errors in $L^{\infty}$-norm and the corresponding numerically obtained orders of convergence for $N=2$ and $N=3$ Gauss-Lobatto nodes, respectively.

\begin{table}
\centering
 \begin{tabular}{|c|c|c|c|c|c|}
 \hline
  $\varepsilon$ & $K$ & $err_\rho^{K,2}$ & Order  & $err_g^{K,2}$ & Order \\
  \hline
\multirow{4}{*}{$0.5$} &  5 & 5.05e-02 & - & 1.23e-01 & - \\
                    & 10 & 2.35e-02 & 1.10 & 6.28e-02 & 0.97 \\
                    & 15 & 1.62e-02 & 0.92 & 4.27e-02 & 0.95\\
                    & 20 & 1.20e-02 & 1.05 & 3.19e-02 & 1.01\\
                    & 25 & 9.67e-03 & 0.96 & 2.56e-02 & 0.98\\\hline
\multirow{4}{*}{$10^{-2}$} &  5 & 2.52e-02 & - & 2.28e-01 & - \\
                    & 10 & 6.46e-03 & 1.96 & 1.14e-01 & 1.00 \\
                    & 15 & 3.01e-03 & 1.88 & 7.70e-02 & 0.96 \\
                    & 20 & 1.71e-03 & 1.98 & 5.76e-02 & 1.01\\
                    & 25 & 1.09e-03 & 2.00 & 4.62e-02 & 0.99\\\hline
\multirow{4}{*}{$10^{-6}$} &  5 & 2.51e-02 & - & 2.28e-01 & - \\
                    & 10 & 6.45e-03 & 1.96 & 1.14e-01 & 1.00\\
                    & 15 & 3.01e-03 & 1.88 & 7.69e-02 & 0.96\\
                    & 20 & 1.71e-03 & 1.97 & 5.76e-02 & 1.01\\
                    & 25 & 1.09e-03 & 2.00 & 4.62e-02 & 0.99\\\hline
 \end{tabular}
 \caption{Errors and convergence orders for $N=2$}
 \label{tab:convergenceN2}
\end{table}

\begin{table}
\centering
 \begin{tabular}{|c|c|c|c|c|c|}
 \hline
  $\varepsilon$ & $K$ & $err_\rho^{K,3}$ & Order  & $err_g^{K,3}$ & Order \\
  \hline
  \multirow{4}{*}{$0.5$} &  5 & 9.76e-03 & - & 2.01e-02 & - \\
                    & 10 & 4.22e-04 & 4.53 & 8.73e-04 & 4.53\\
                    & 15 & 1.12e-04 & 3.27 & 2.26e-04 & 3.33\\
                    & 20 & 4.83e-05 & 2.92 & 9.80e-05 & 2.90\\
                    & 25 & 1.91e-05 & 4.16 & 4.69e-05 & 3.30\\\hline
\multirow{4}{*}{$10^{-2}$} &  5 & 2.73e-02 & - & 3.33e-02 & - \\
                    & 10 & 2.16e-03 & 3.66 & 2.38e-03 & 3.80\\
                    & 15 & 5.96e-04 & 3.18 & 6.20e-04 & 3.32\\
                    & 20 & 2.45e-04 & 3.09 & 2.49e-04 & 3.17\\
                    & 25 & 1.24e-04 & 3.05 & 1.26e-04 & 3.05\\\hline
\multirow{4}{*}{$10^{-6}$} &  5 & 2.73e-02 & - & 3.33e-02 & - \\
                    & 10 & 2.17e-03 & 3.66 & 2.38e-03 & 3.80\\
                    & 15 & 5.96e-04 & 3.18 & 6.20e-04 & 3.32\\
                    & 20 & 2.45e-04 & 3.09 & 2.49e-04 & 3.17\\
                    & 25 & 1.24e-04 & 3.05 & 1.26e-04 & 3.05\\\hline
 \end{tabular}
 \caption{Errors and convergence orders for $N=3$}
 \label{tab:convergenceN3}
\end{table}

\begin{table}
\centering
 \begin{tabular}{|c|c|c|c|c|c|}
 \hline
  $\varepsilon$ & $K$ & $err_\rho^{K,3}$ & Order  & $err_g^{K,3}$ & Order \\
  \hline
  \multirow{4}{*}{$0.5$} &  5 & 2.48e-05 & - & 4.98e-05 & - \\
                    & 10 & 5.13e-07 & 5.60 & 1.16e-06 & 5.42\\
                    & 15 & 5.96e-08 & 5.31 & 1.57e-07 & 4.94\\
                    & 20 & 1.46e-08 & 4.89 & 3.61e-08 & 5.11\\
                    & 25 & 4.88e-09 & 4.91 & 1.15e-08 & 5.11\\\hline
\multirow{4}{*}{$10^{-2}$} &  5 & 1.24e-04 & - & 1.36e-04 & - \\
                    & 10 & 2.91e-06 & 5.41 & 2.88e-06 & 5.56\\
                    & 15 & 3.67e-07 & 5.11 & 3.71e-07 & 5.05\\
                    & 20 & 8.57e-08 & 5.05 & 8.64e-08 & 5.06\\
                    & 25 & 2.80e-08 & 5.02 & 2.80e-08 & 5.05\\\hline
\multirow{4}{*}{$10^{-6}$} &  5 & 1.24e-04 & - & 1.35e-04 & - \\
                    & 10 & 2.91e-06 & 5.41 & 2.86e-06 & 5.56\\
                    & 15 & 3.67e-07 & 5.11 & 3.70e-07 & 5.05\\
                    & 20 & 8.58e-08 & 5.05 & 8.65e-08 & 5.06\\
                    & 25 & 2.80e-08 & 5.02 & 2.80e-08 & 5.05\\\hline
 \end{tabular}
 \caption{Errors and convergence orders for $N=5$}
 \label{tab:convergenceN5}
\end{table}

\begin{table}
\centering
 \begin{tabular}{|c|c|c|c|c|c|}
 \hline
  $\varepsilon$ & $K$ & $err_\rho^{K,3}$ & Order  & $err_g^{K,3}$ & Order \\
  \hline
  \multirow{4}{*}{$0.5$} &  5 & 5.88e-08 & - & 1.09e-07 & - \\
                    & 10 & 3.40e-10 & 7.43 & 7.51e-10 & 7.18\\
                    & 15 & 2.06e-11 & 6.92 & 4.20e-11 & 7.11\\
                    & 20 & 3.05e-12 & 6.64 & 5.65e-12 & 6.97\\
                    & 25 & 7.31e-13 & 6.40 & 1.18e-12 & 7.03\\\hline
\multirow{4}{*}{$10^{-2}$} &  5 & 3.12e-07 & - & 3.27e-07 & - \\
                    & 10 & 1.93e-09 & 7.33 & 1.89e-09 & 7.43\\
                    & 15 & 1.10e-10 & 7.06 & 1.10e-10 & 7.02\\
                    & 20 & 1.46e-11 & 7.02 & 1.44e-11 & 7.05\\
                    & 25 & 3.11e-12 & 6.95 & 3.02e-12 & 7.02\\\hline
\multirow{4}{*}{$10^{-6}$} &  5 & 3.12e-07 & - & 3.25e-07 & - \\
                    & 10 & 1.93e-09 & 7.33 & 1.89e-09 & 7.43\\
                    & 15 & 1.10e-10 & 7.06 & 1.10e-10 & 7.02\\
                    & 20 & 1.46e-11 & 7.02 & 1.45e-11 & 7.04\\
                    & 25 & 3.11e-12 & 6.95 & 3.03e-12 & 7.00\\\hline
 \end{tabular}
 \caption{Errors and convergence orders for $N=7$}
 \label{tab:convergenceN7}
\end{table}

\FloatBarrier
\subsection{Example with variable
scattering frequency, homogeneous Dirichlet boundary conditions, and source term}

This example is taken from \cite{LemouMieussens:2008} and adds a source term $G$ with $G(x,t)=1$ to the model \eqref{eq:eqns} on the domain $\Omega_x = [0,1]$. The scaling parameter is set to $\varepsilon=10^{-2}$ and the scattering and absorption parameters are $\sigma_s(x)=1+100x^2$ and $\sigma_a(x)=0$.  Initial conditions are $\rho(x,0) = 0$ and $g(x,v,0)=0$ which corresponds to $f(x,v,0)=0$ in the original kinetic equation \eqref{eq:kinetic_f}. Homogeneous inflow boundary conditions are chosen, i.e. on the left domain boundary, we have
\[\rho(0,t)+\varepsilon g(0,v,t) = f_L(v)=0,\ \mbox{for}\ v>0\]
and on the right boundary, we have
\[\rho(1,t)+\varepsilon g(1,v,t) =f_R(v)=0,\ \mbox{for}\ v<0.\] 
The velocity space is again discretized by $n_v=16$ Gauss-Lobatto nodes. A reference solution of this problem is obtained on a fine grid using a multi-element spatial domain with $K=3000$ elements and $N=n_x+1=3$ Gauss-Lobatto nodes on each element and time discretization with the same number $K$ of time slabs also discretized by a Gauss-Lobatto SBP scheme in time with $n_t+1=3$
nodes.

Numerical results are plotted at time $t=0.4$ in figure \ref{fig:homogeneousDirichlet}. In order to demonstrate energy stability of the boundary treatment, a very coarse grid in time is used with only a single time slab and $n_t+1=3$ Gauss-Lobatto nodes in time. None of our computations showed instability, also for larger values of $K$ and $N=n_x+1$. The figure shows a coarse grid solution with $K=10$ blocks and $N=2$ Gauss-Lobatto nodes per block which is less accurate in particular due to its combination with a coarse grid in time.
The discretizations with a a smaller number of cells but higher order SBP operator using $N=20$ nodes on $K=2$ cells or $N=7$ nodes on $K=5$ cells are visually indistinguishable from the reference solution.

\begin{figure}[h!]
    \centering
    \includegraphics[width=0.7\linewidth]{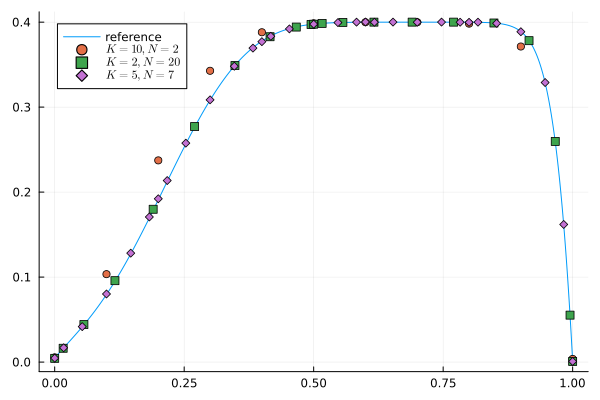}
    \caption{Example with variable scattering frequency and homogeneous Dirichlet boundary conditions.}
    \label{fig:homogeneousDirichlet}
\end{figure}

\subsection{Example with non-homogeneous Dirichlet boundary conditions}

In this example, also from \cite{LemouMieussens:2008}, the model \eqref{eq:eqns} on the domain $\Omega_x = [0,1]$ is equipped with scattering and absorption parameters $\sigma_s(x)=1$ and $\sigma_a(x)=0$.
As in the previous example, initial conditions are $\rho(x,0) = 0$ and $g(x,v,0)=0$. Non-homogeneous inflow boundary conditions are chosen with
\[\rho(0,t)+\varepsilon g(0,v,t) = f_L(v)=1,\ \mbox{for}\ v>0\]
and
\[\rho(1,t)+\varepsilon g(1,v,t) =f_R(v)=0,\ \mbox{for}\ v<0.\]
The velocity space discretization is the same as in the previous example with $n_v=16$ Gauss-Lobatto nodes. A reference solution of this problem is obtained on a fine grid using a multi-element spatial domain with $K=2000$ elements and $N=n_x+1=2$ Gauss-Lobatto nodes on each element.

Numerical results were obtained using a multi-element spatial domain with $K=10$ elements and $N\in\{2,3\}$ nodes per element. A multi-slab temporal domain time with coarser time discretization has been used, with spatial element size $\Delta x$ and temporal element size $\Delta t$ scaling as $\Delta t=10\Delta x$. The number of nodes per time element was also set to $n_t+1=N$. Figure \ref{fig:inhomDirichletK10eps1} shows the numerical results in the kinetic regime for times $t=0.1,0.4,1.0,1.6,4.0$ with scaling parameter set to $\varepsilon=1$. The figure shows that the higher order space-time discretization yields a better representation of the reference solution in particular at the left boundary. Figure \ref{fig:inhomDirichletK10eps10e_8} shows the numerical results in the diffusion regime with scaling parameter set to $\varepsilon=10^{-8}$. Also in this case, the higher order scheme yields better results, although the differences are less pronounced.

\begin{figure}[h!]
    \centering
    \includegraphics[width=0.49\linewidth]{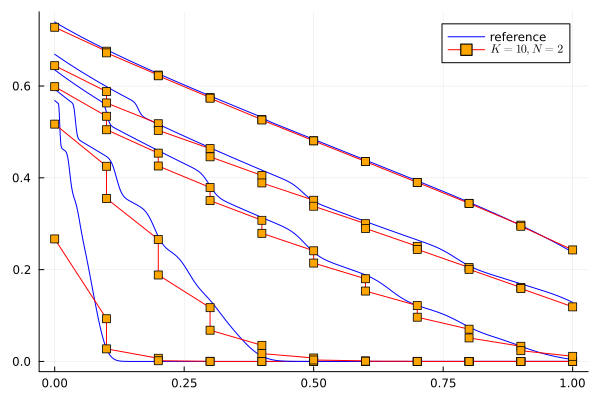}
       \includegraphics[width=0.49\linewidth]{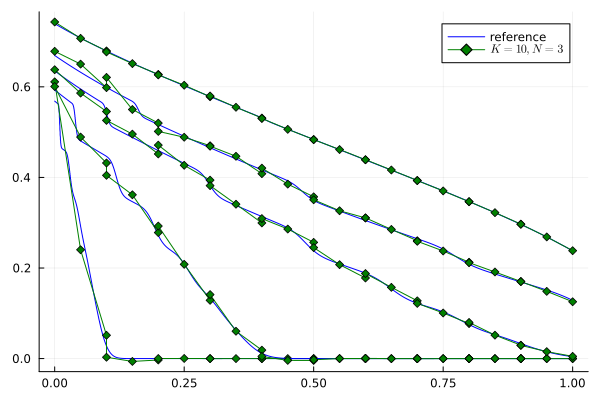}
    \caption{Numerical solutions at times $t=0.1,0.4,1.0,1.6,4.0$ for inhomogeneous Dirichlet boundary conditions in the kinetic regime $\varepsilon=1$. Multi-element spatial domain with $K=10$ cells and $N=2$ (left) vs $N=3$ (right) nodes per cell.}
    \label{fig:inhomDirichletK10eps1}
\end{figure}

\begin{figure}
    \centering
    \includegraphics[width=0.49\linewidth]{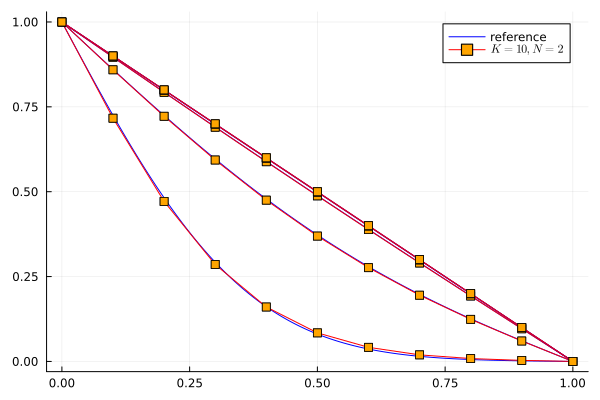}
       \includegraphics[width=0.49\linewidth]{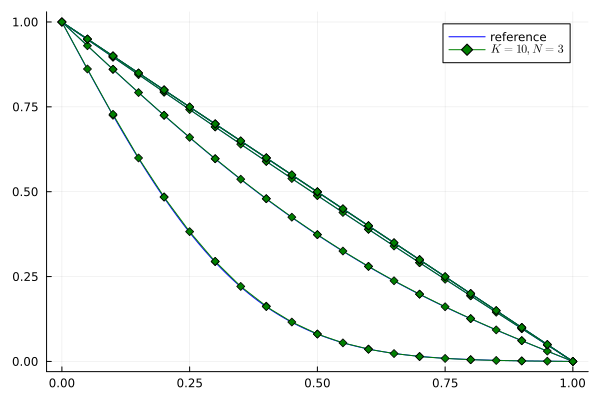}
    \caption{Numerical solutions at times $t=0.1,0.4,1.0,1.6,4.0$ for inhomogeneous Dirichlet boundary conditions in the diffusive regime $\varepsilon=10^{-8}$. Multi-element spatial domain with $K=10$ cells and $N=2$ (left) vs $N=3$ (right) nodes per cell.}
    \label{fig:inhomDirichletK10eps10e_8}
\end{figure}

\FloatBarrier

\section{Conclusions and future work} \label{sec:conclusions}
We developed an unconditionally energy-stable tensor-product space-time discretization framework based on the combination of spatial and temporal SBP operators which is applicable to linear kinetic transport equations in diffusive scaling. Our starting point was the proof of stability for the continuous linear kinetic model. Aligning with the SBP paradigm -- which systematically transfers structural properties from the continuous problem to the discrete level -- energy stability was first established for a single spatial element and over one time slab. The analysis was then extended to multiple elements, and subsequently to the general setting involving multiple time slabs and multiple elements in space. In this regard, fully discrete stability and asymptotic preservation were proven for general spatial and temporal discretizations with SBP property not restricted to specific nodal sets. The framework thus includes both finite difference schemes with SBP property and discontinuous Galerkin schemes with central fluxes. Furthermore, a new provably energy-stable Dirichlet boundary treatment for the micro-macro-decomposed system was developed on the basis of the introduction of SATs. Numerical simulations further cemented our theoretical results where we showed convergence for smooth problems and demonstrated energy stability of the proposed boundary treatment. So far, the classical SBP framework has been considered in this work which yields central discretization operators in space. For Riemann problems based on kinetic equations which admit shocks in the asymptotic limit, upwind spatial discretization based on upwind SBP operators \cite{MATTSSON2017diagonal} is more suitable. Carrying over the energy stabilty results to this setting is subject for future research.

\section*{Acknowledgments}
Authors 1 and 4 acknowledge the support of the Natural Sciences and Engineering Research Council of Canada (NSERC), [funding reference numbers ALLRP 580963-22, RGPIN-2022-03211]. Authors 2 and 3 acknowledge support by the Deutsche Forschungsgemeinschaft (DFG, German Research Foundation) within the DFG priority program SPP~2410 [project number 526073189].

\appendix
\section{Diagonalization of the associated linear hyperbolic system} \label{sec:app_diag}
In the following, we provide the diagonalization of the micro-macro decomposed linear kinetic equation neglecting the forcing term on the right-hand side of \eqref{eq:eqns}. This provides us with an indication of the number and direction of boundary conditions to be prescribed in the case of a bounded domain both with and without the assumption of periodicity.

Neglecting the forcing terms, the system \eqref{eq:rho}-\eqref{eq:g}  can be written in matrix-vector formulation as

\begin{align}
    \underbrace{\begin{bmatrix}
        \rho \\ g_1 \\\vdots \\ g_{n_v}
    \end{bmatrix}_t}_{u_t} + \underbrace{\begin{bmatrix}
        0 & w_1v_1 & \cdots && w_{n_v}v_{n_v} \\
        \tfrac{v_1}{\eps^2} & \tfrac{1-w_1}{\eps}v_1 & -\tfrac{w_2}{\eps}v_2 & \cdots & -\tfrac{w_{n_v}}{\eps}v_{n_v} \\\\
        \vdots & \\\\
        \tfrac{v_{n_v}}{\eps^2} & -\tfrac{w_1}{\eps}v_1  & -\tfrac{w_2}{\eps}v_2 & \cdots & \tfrac{1-w_{n_v}}{\eps}v_{n_v}
    \end{bmatrix}}_{\A} \begin{bmatrix}
        \rho \\ g_1 \\\vdots \\ g_{n_v}
    \end{bmatrix}_x & = 0.
\end{align}
This system can be diagonalized to obtain a fully decoupled set of transport equations, revealing the number of boundary conditions needed for a well-posed problem. We obtain
\begin{align*}
    z_t + \Lambda z_x = 0,
\end{align*}
with eigenvalues collected in the diagonal matrix
  \begin{align*}
  \Lambda = \begin{bmatrix}
        0 &  & & \\
         & \tfrac{v_1}{\eps} &  \\
          &  & \ddots &\\
          &  &  & \tfrac{v_{n_v}}{\eps}
    \end{bmatrix}
    \end{align*}
and $z = \begin{bmatrix} z^{\sss 1}, \ldots, z^{\sss n_v+1} \end{bmatrix}^\top = \X^{-1} u$, with $z^{\sss 1}=0,\ z^{\sss k+1}=\frac{w_k\rho}{\eps}+w_kg_k,\ k=2,\ldots,n_{v}+1$ and the matrix of eigenvectors $\X$ and its inverse $\X^{-1}$ given by
\begin{align*}
    \X &= \begin{bmatrix}
        - \eps & \eps & \cdots && \eps \\
        1 & \theta_1 & -1 & \cdots & -1 \\
        1 & -1 & \theta_2 & -1 & \cdots \\
        \vdots & & \ddots & \ddots & \ddots\\
        1 & -1 & \cdots & -1 & \theta_{n_v}
    \end{bmatrix}, \quad \theta_k=\frac{1-w_k}{w_k}=\frac{1}{w_k}-1, \\
    \X^{-1} &= \begin{bmatrix}
        0 & w_1 & \ldots & & w_{n_v} \\
        \tfrac{w_1}{\eps} & w_1 & 0 & \cdots & 0 \\
        \tfrac{w_2}{\eps} & 0 & w_2 & 0 & \cdots \\
        \vdots & & & \ddots\\
        \tfrac{w_{n_v}}{\eps} & 0 & \cdots & 0 & w_{n_v}
    \end{bmatrix}.
\end{align*}
We see that the direction of boundary conditions needed depends on the velocity at the given velocity node with left-hand boundary conditions corresponding to positive eigenvalues $\lambda = \tfrac{v_k}{\eps}$ with $v_k>0$ and right-hand boundary corresponding to negative eigenvalues $\lambda =  \tfrac{v_k}{\eps}$ with $v_k<0$.

In case of Dirichlet boundary treatment, boundary values must therefore be specified on the left-hand boundary for $v>0$ and on the right-hand boundary for $v<0$. With the SAT strategy considered in this work, Dirichlet boundary conditions are implemented weakly in the numerical scheme.

Periodic boundary conditions may also be weakly imposed using SATs for the diagonalized system resulting in 
\begin{align*}
    \begin{bmatrix}
        \D_t \mb{z}^{\sss 1} \\ \D_t \mb{z}^{\sss 2} \\ \vdots\\ \D_t \mb{z}^{\sss n_v+1}
    \end{bmatrix} + \Lambda \begin{bmatrix}
        \D_x  \mb{z}^{\sss 1} \\  \D_x \mb{z}^{\sss 2} \\ \vdots\\ \D_x \mb{z}^{\sss n_v+1}
    \end{bmatrix} & = \begin{bmatrix}
        0 \\ \SAT^{\sss z_{\sss 2}} \\ \vdots \\ \SAT^{\sss z_{\sss n_v+1}} \end{bmatrix}
        \eqqcolon \begin{bmatrix}
            0 \\ 
            \tfrac{v_1}{2\eps} \H^{-1} \left( \t_\sR (\t_\sR^\top - \t_\sL^\top) - \t_\sL (\t_\sL^\top - \t_\sR^\top) \right) \mb{z}^{\sss 2} \\\vdots\\
            \tfrac{v_{n_v}}{2\eps} \H^{-1} \left( \t_\sR (\t_\sR^\top - \t_\sL^\top) - \t_\sL (\t_\sL^\top - \t_\sR^\top) \right) \mb{z}^{\sss n_v+1}
        \end{bmatrix}.
\end{align*}
By using the diagonalizing matrices to transform the system back to the original variables, we arrive at the scheme 
\begin{align*}
    \begin{bmatrix}
        \D_t \rho \\ \D_t \mb{g}_1\\ \vdots\\ \D_t \mb{g}_{n_v}
    \end{bmatrix} + A \begin{bmatrix}
        \D_x  \mb{g}_1 \\  \D_x \mb{g}_2\\ \vdots\\ \D_x \mb{g}_{n_v}
    \end{bmatrix} & = \begin{bmatrix}
        \SAT^{\mb{\rho}}  \\ \SAT^{g_1} \\ \vdots \\ \SAT^{g_{n_v}} \end{bmatrix}
        \eqqcolon \begin{bmatrix}
            \tfrac{1}{2} \H^{-1} \left( \t_\sR (\t_\sR^\top - \t_\sL^\top) - \t_\sL (\t_\sL^\top - \t_\sR^\top) \right) \mean{v\mb{g}}\\ 
            \tfrac{1}{2\eps} \H^{-1} \left( \t_\sR (\t_\sR^\top - \t_\sL^\top) - \t_\sL (\t_\sL^\top - \t_\sR^\top) \right) \left(\frac{v_1}{\eps}\brho+v_k\mb{g}_1-\mean{v\mb{g}}\right) \\\vdots\\
            \tfrac{1}{2\eps} \H^{-1} \left( \t_\sR (\t_\sR^\top - \t_\sL^\top) - \t_\sL (\t_\sL^\top - \t_\sR^\top) \right) \left(\frac{v_{n_v}}{\eps}\brho+v_{n_v}\mb{g}_{n_v}-\mean{v\mb{g}}\right)
        \end{bmatrix}.
\end{align*}

\section{Stability proofs}\label{app:proofs}

In this section, we provide the technical proofs of stability of the space-time discretization for the cases of a multi-element spatial domain and a multi-slab temporal domain. We start first with the multi-element single-slab case of Theorems \ref{thm:multiel_meang} and \ref{thm:multiel_stability} and then move on to the multi-element multi-slab situation considered in Theorems \ref{thm:multislab_meang} and \ref{thm:multislab_stab}. 

For the proofs for the multi-element single-slab formulation, recall that the scheme was defined as 
\begin{subequations}\label{eq:multiel_recall}
    \begin{align}
        \D_t \ro + \tD_x \mean{v\g} & = -\sigma_a \ro + \SAT_{\rho, 0}, \label{eq:multiel_rho_recall}\\
        \D_t \g_k + \tfrac{v_k}{\eps} \tD_x \g_k - \tfrac{1}{\eps} \mean{v\tD_x \g} + \tfrac{v_k}{\eps^2} \tD_x \ro & = - \left( \tfrac{\sigma_s}{\eps^2} + \sigma_a \right) \g_k + \SAT_{g_k, 0}, \qquad k=1,\ldots,n_v, \label{eq:multiel_g_recall}
    \end{align}
\end{subequations}
with
\begin{align*}
    \SAT_{\rho, 0} & = - \H_t^{-1} \t_\sB \t_\sB^\top (\ro - \ro(0)), \\
    \SAT_{g_k, 0} & = - \H_t^{-1} \t_\sB \t_\sB^\top (\g_k - \g_k(0)),
\end{align*}
where $\tD_x = \It \otimes \tilde{\bD}^\sG_x$. The matrix $\tilde{\bD}^\sG_x$ is defined by incorporating the SATs into the spatial SBP operator as follows.
\begin{align*}
    \bD_x^\sG - \bSAT^\sG & = \begin{bmatrix}
        \bH_x^{-1} \bQ_x - \tfrac12 \bH_x^{-1} (\bt_\sR \bt_\sR^\top - \bt_\sL \bt_\sL^\top) & \tfrac12 \bH_x^{-1} \bt_\sR \bt_\sL^\top & -\tfrac12 \bH_x^{-1} \bt_\sL \bt_\sR^\top \\ 
        -\tfrac12 \bH_x^{-1} \bt_\sL \bt_\sR^\top & \bH_x^{-1} \bQ_x - \tfrac{1}{2} \bH_x^{-1} (\bt_\sR \bt_\sR^\top - \bt_\sL \bt_\sL^\top) & \tfrac12 \bH_x^{-1} \bt_\sR \bt_\sL^\top \\ 
        \tfrac12 \bH_x^{-1} \bt_\sR \bt_\sL^\top & -\tfrac12 \bH_x^{-1} \bt_\sL \bt_\sR^\top & \bH_x^{-1} \bQ_x - \tfrac12 \bH_x^{-1} (\bt_\sR \bt_\sR^\top - \bt_\sL \bt_\sL^\top)
    \end{bmatrix}, \\
    & = \begin{bmatrix}
        \bH_x^{-1} & 0 & 0 \\
        0 & \bH_x^{-1} & 0 \\
        0 & 0 & \bH_x^{-1}
    \end{bmatrix}
    \begin{bmatrix}
        \bS_x & \tfrac{1}{2} \t_\sR \t_\sL^\top & - \tfrac12 \t_\sL \t_\sR^\top \\
        - \tfrac{1}{2} \t_\sL \t_\sR^\top  &  \bS_x & \tfrac12 \t_\sR \t_\sL^\top \\
        \tfrac12 \bt_\sR \bt_\sL^\top & -\tfrac12 \bt_\sL \bt_\sR^\top & \bS_x
    \end{bmatrix}.
\end{align*}
The skew-symmetry of the second matrix above is easily seen by looking at the form of the sub-matrices:
\begin{align*}
    \bD_x^\sG - \bSAT^\sG 
    & = \begin{bmatrix}
        \bH_x^{-1} & 0 & 0 \\ 0 & \bH_x^{-1} & 0 \\ 0 & 0 & \bH_x^{-1} 
    \end{bmatrix} \begin{bmatrix} \bS_x & 
        \begin{bmatrix}
            0 & 0 & \ldots & 0 \\
            \vdots & & \ddots & \vdots\\
            \phantom{-} \tfrac{1}{2} & 0 & \ldots & 0
        \end{bmatrix} & 
        \begin{bmatrix}
            0 & 0 & \ldots & -\tfrac12 \\
            \vdots & & \ddots & \vdots \\
            0 & 0 & \ldots & 0
        \end{bmatrix}
        \\ \\
        \begin{bmatrix}
            0 & 0 & \ldots & -\tfrac{1}{2}  \\
            \vdots & & \ddots & \vdots\\
            0 & 0 & \ldots & 0
        \end{bmatrix} & \bS_x  & 
        \begin{bmatrix}
            0 & 0 & \ldots & 0 \\
            \vdots & & \ddots & \vdots \\
            \phantom{-}\tfrac12 & 0 & \ldots & 0 
        \end{bmatrix} \\ \\
        \begin{bmatrix}
            0 & 0 & \ldots & 0 \\
            \vdots & & \ddots & \vdots \\
            \phantom{-} \tfrac12 & 0 & \ldots & 0 
        \end{bmatrix} & 
        \begin{bmatrix}
            0 & 0 & \ldots & -\tfrac12 \\
            \vdots & & \ddots & \vdots \\
            0 & 0 & \ldots & 0 
        \end{bmatrix} & \bS_x
    \end{bmatrix} \\
    &\eqqcolon (\I_3 \otimes \bH_x^{-1}) \tilde{\bQ}_x^\sG = \tilde{\bD}_x^\sG.
\end{align*}

\begin{proof}[Proof of Theorem \ref{thm:multiel_meang}]
    Similarly as in the proof of Theorem \ref{thm:disc_g}, we multiply each of the $k$ schemes in \eqref{eq:multiel_g_recall} by its corresponding velocity weight $\omega_k$ and add up the equations for $k = 1, \ldots, n_v$ to obtain
    \begin{align*}
        \D_t \mean{\g} + \tfrac{1}{\eps} \mean{v \tD_x \g} - \tfrac{1}{\eps} \mean{v\tD \g} + \tfrac{1}{\eps^2} \mean{v} \tD_x \ro & = - \left( \tfrac{\sigma_s}{\eps^2} + \sigma_a \right) \mean{\g} - \H_t^{-1} \t_\sB \t_\sB^\top \left( \mean{\g} - \mean{\g(0)} \right).
    \end{align*}
    Since we have assumed that $\mean{\g(0)} = 0$, we arrive at
    \begin{align*}
        \H_t^{-1} (\Q_t + \t_\sB \t_\sB^\top) \mean{\g} & = - \left( \tfrac{\sigma_s}{\eps^2} + \sigma_a \right) \mean{\g}.
    \end{align*}
    Note that we have arrived at the equivalent eigenvalue problem given in \eqref{eq:mean_g}, only that the matrices are now larger due to the multiple elements, e.g. $\Q_t = \bQ_t \otimes \I_{N_x} \otimes \Ix$. However, their structures are the same, and we can therefore employ Lemma \ref{lem:EV} to conclude that $\mean{\g} = 0$. 
\end{proof}

\begin{proof}[Proof of Theorem \ref{thm:multiel_stability}]
    Multiplying \eqref{eq:multiel_rho_recall} by $\ro^\top \H$ and each of the $k$ equations in \eqref{eq:multiel_g_recall} by $\omega_k \eps^2 \g_k^\top \H$, respectively, and then adding all the equations, results in    
    \begin{align*}
        & \ro^\top \Q_t \ro + \eps^2 \mean{\g^top \Q_t \g} + \ro^\top \H \tD_x \mean{v\g} + \eps \mean{v\g^\top \H \tD_x \g} - \eps \mean{g}^\top \H \mean{v \tD_x \g} + \mean{v\g}^\top \H \tD_x \ro \\
        & = - \sigma_a \norm{\ro}^2_\H - (\sigma_s + \eps^2 \sigma_a) \vvvert \g \vvvert^2_\H + \ro^\top \H \SAT_{\sss \rho, 0} + \eps^2 \mean{\g^\top \H \SAT_{\sss g, 0}}.
    \end{align*}
    This takes the same form as Equation \eqref{eq:single_stability} in the proof of Theorem \ref{thm:scheme_single}. It is the skew-symmetry of the spatial operator $\tilde{\Q}_x$ and the fact that $\mean{g} = 0$ that are essential to arriving at the stability estimate in that proof. Since $\tilde{\tQ}_x$ in the above equation is also fully skrew-symmetric, and $\mean{g} = 0$ by Theorem \ref{thm:multiel_meang}, the stability of \eqref{eq:multiel_recall} follows from the proof of stability in Theorem \ref{thm:scheme_single}.
\end{proof}

\begin{proof}[Proof  of Theorem \ref{thm:multislab_meang}]
    From Theorem \ref{thm:multiel_meang}, we know that $\mean{\g^\srone} = 0$ for the first time slab. To prove that this holds also for the second time slab, we multiply each of the $n_v$ schemes for the non-equilibrium part, $\g^\srtwo$ by the corresponding velocity weight $\omega_k$ and them sum all the equations to obtain
    \begin{align*}
        \D_t \mean{\g^\srtwo} + \tfrac{1}{\eps} \mean{v \tD_x \g^\srtwo} - \tfrac{1}{\eps} \mean{v\tD_x \g} + \tfrac{1}{\eps^2} \mean{v} \tD_x \ro & = - \left( \tfrac{\sigma_s}{\eps^2} + \sigma_a \right) \mean{\g^\srtwo} - \H_t^{-1} \t_\sB (\t_\sB^\top \mean{\g^\srtwo} - \t_\sT^\top \mean{\g^\srone}).
    \end{align*}
    Using that $\mean{\g^\srone} = 0$, the above reduces to 
    \begin{align*}
        \H_t^{-1} (\Q_t + \t_\sB \t_\sB^\top) \mean{\g^\srtwo} = - \left( \tfrac{\sigma_s}{\eps^2} + \sigma_a \right) \mean{\g^\srtwo}.
    \end{align*}
    Thus, it follows from Theorem \ref{thm:multiel_meang} that also $\mean{\g^\srtwo} = 0$. The same procedure is used to prove that $\mean{\g} = 0$ also for multiple time slabs. 
\end{proof}

\begin{proof}[Proof of Theorem \ref{thm:multislab_stab}]
    The scheme can be written more compactly as 
    \begin{subequations}
        \begin{align}
            (\I_2 \otimes \D_t) \ro + (\I_2 \otimes \tD_x) \mean{v\g} & = - \sigma_a \ro + \SAT_{\sss \rho}, \label{eq:multislab_rho}\\
            (\I_2 \otimes \D_t) \g_k + \tfrac{v_k}{\eps} (\I_2 \otimes \tD_x) \g_k - \tfrac{1}{\eps} \mean{v (\I_2 \otimes \tD_x) \g} + \tfrac{v_k}{\eps^2} (\I_2 \otimes \tD_x) \ro & = - \left( \tfrac{\sigma_s}{\eps^2} + \sigma_a \right) \g_k + \SAT_{\sss g_k}, \label{eq:multislab_g} \\
            k & = 1, \ldots, n_v \nonumber
        \end{align}
    \end{subequations}
    where
    \begin{subequations}
        \begin{align}
            \SAT_{\sss \rho} & = - \begin{bmatrix}
                \H_t^{-1} \t_\sB \t_\sB^\top (\ro^\srone - \ro^\srone(0)) \\
                \H_t^{-1} \t_\sB (\t_\sB^\top \ro^\srtwo - \t_\sT^\top \ro^\srone)
            \end{bmatrix}, \\
            \nonumber\\
            \SAT_{\sss g_k} & = - \begin{bmatrix}
                \H_t^{-1} \t_\sB \t_\sB^\top (\g_k^\srone - \g_k^\srone(0)) \\
                \H_t^{-1} \t_\sB (\t_\sB^\top \g_k^\srtwo - \t_\sT^\top \g_k^\srone)
            \end{bmatrix}, \hspace{2em} k = 1, \ldots, n_v,
        \end{align}
    \end{subequations}
    and $\ro^\top = \begin{bmatrix} (\ro^\srone)^\top, (\ro^\srtwo)^\top \end{bmatrix}$ and similarly for $\g_k$. By multiplying \eqref{eq:multislab_rho} by $\ro^\top (\I_2 \otimes \H)$ and the $n_v$ equations \eqref{eq:multislab_g} by $\omega_k \eps^2 \g_k^\top (\I_2 \otimes \H)$, respectively, we obtain
    \begin{align*}
        \ro^\top (\I_2 \otimes \Q_t) \ro + \eps^2 \mean{\g_k^\top (\I_2 \otimes \Q_t) \g_k} = &  -\sigma_a \norm{\ro}^2_\H - (\sigma_s + \eps^2 \sigma_a) \vvvert \g \vvvert^2_\H \\
        & - \ro^\top (\I_2 \otimes \H) \begin{bmatrix}
            \H_t^{-1} \t_\sB \t_\sB^\top (\ro^\srone - \ro^\srone(0)) \\
                \H_t^{-1} \t_\sB (\t_\sB^\top \ro^\srtwo - \t_\sT^\top \ro^\srone)
        \end{bmatrix} \\
        & - \eps^2 \mean{\g^\top (\I_2 \otimes \H) \begin{bmatrix}
                \H_t^{-1} \t_\sB \t_\sB^\top (\g^\srone - \g^\srone(0)) \\
                \H_t^{-1} \t_\sB (\t_\sB^\top \g^\srtwo - \t_\sT^\top \g^\srone)
            \end{bmatrix}}.
    \end{align*}
    We have skipped the steps to arrive at the above equation since it is analogous to the derivation of the stability proof for the multi-element case in Section \ref{sec:multiel}. We use that $\Q_t = \bQ_t \otimes \I_{N_x} \otimes \bH_x = (\bE_t - \bQ_t^\top) \otimes \I_{N_x} \otimes \bH_x = (\bt_\sT \bt_\sT^\top - \bt_\sB \bt_\sB^\top) \otimes \I_{N_x} \otimes \bH_x - \bQ_t^\top \otimes \I_{N_x} \otimes \bH_x$ on the left-hand side above to obtain
    \begin{align*}
        \tfrac12 \ro^\top (\I_2 \otimes \bt_\sT \bt_\sT^\top \otimes \I_{N_x} \otimes \bH_x) \ro & + \tfrac12 \eps^2 \mean{\g^\top (\I_2 \otimes \bt_\sT \bt_\sT^\top \otimes \I_{N_x} \otimes \bH_x) \g} \\
        = & -\sigma_a \norm{\ro}^2_\H - (\sigma_s + \eps^2 \sigma_a) \vvvert \g \vvvert^2_\H \\
        & + \tfrac12 \ro^\top (\I_2 \otimes \bt_\sB \bt_\sB^\top \otimes \I_{N_x} \otimes \bH_x) \ro + \tfrac12 \eps^2 \mean{\g^\top (\I_2 \otimes \bt_\sB \bt_\sB^\top \otimes \I_{N_x} \otimes \bH_x) \g} \\
         & - \ro^\top \begin{bmatrix}
            (\It \otimes \I_{N_x} \otimes \bH_x) \t_\sB \t_\sB^\top (\ro^\srone - \ro^\srone(0)) \\
            (\It \otimes \I_{N_x} \otimes \bH_x) \t_\sB (\t_\sB^\top \ro^\srtwo - \t_\sT^\top \ro^\srone)
        \end{bmatrix}.
    \end{align*}
    Writing out the terms, we get
    \begin{align*}
        \tfrac12 (\ro^\srtwo)^\top (\bt_\sT \bt_\sT^\top \otimes \I_{N_x} \otimes \bH_x) \ro^\srtwo & + \tfrac12 \eps^2 \mean{(\g^\srtwo)^\top (\bt_\sT \bt_\sT^\top \otimes \I_{N_x} \otimes \bH_x) \g^\srtwo} \\ 
        = & -\sigma_a \norm{\ro}^2_\H - (\sigma_s + \eps^2 \sigma_a) \vvvert \g \vvvert^2_\H \\ 
        & - \tfrac12 (\ro^\srone)^\top (\bt_\sT \bt_\sT^\top \otimes \I_{N_x} \otimes \bH_x) \ro^\srone + (\ro^\srtwo)^\top (\bt_\sB \bt_\sT^\top \otimes \I_{N_x} \otimes \bH_x) \ro^\srone \\
        & + \tfrac12 (\ro^\srone)^\top (\bt_\sB \bt_\sB^\top \otimes \I_{N_x} \otimes \bH_x) \ro^\srone - (\ro^\srone)^\top (\bt_\sB \bt_\sB^\top \otimes \I_{N_x} \otimes \bH_x) (\ro^\srone - \ro^\srone(0)) \\
        & + \tfrac12 (\ro^\srtwo)^\top (\bt_\sB \bt_\sB^\top \otimes \I_{N_x} \otimes \bH_x) \ro^\srtwo - (\ro^\srtwo)^\top (\bt_\sB \bt_\sB^\top \otimes \I_{N_x} \otimes \bH_x) \ro^\srtwo \\
        & - \tfrac12 \eps^2 \mean{(\g^\srone)^\top (\bt_\sT \bt_\sT^\top \otimes \I_{N_x} \otimes \bH_x) \g^\srone} + \eps^2 \mean{(\g^\srtwo)^\top (\bt_\sB \bt_\sT^\top \otimes \I_{N_x} \otimes \bH_x) \g^\srone} \\
        & + \tfrac12 \eps^2 \mean{(\g^\srone)^\top (\bt_\sB \bt_\sB^\top \otimes \I_{N_x} \otimes \bH_x) \g^\srone} - \eps^2 \mean{(\g^\srone)^\top (\bt_\sB \bt_\sB^\top \otimes \I_{N_x} \otimes \bH_x) (\g^\srone - \g^\srone(0))} \\
        & + \tfrac12 \eps^2 \mean{(\g^\srtwo)^\top (\bt_\sB \bt_\sB^\top \otimes \I_{N_x} \otimes \bH_x) \g^\srtwo} - \eps^2 \mean{(\g^\srtwo)^\top (\bt_\sB \bt_\sB^\top \otimes \I_{N_x} \otimes \bH_x) \g^\srtwo},
\end{align*}
which can be further reduced to
\begin{align*}
        \tfrac12 (\ro^\srtwo)^\top (\bt_\sT \bt_\sT^\top \otimes \I_{N_x} \otimes \bH_x) \ro^\srtwo & + \tfrac12 \eps^2 \mean{(\g^\srtwo)^\top (\bt_\sT \bt_\sT^\top \otimes \I_{N_x} \otimes \bH_x) \g^\srtwo} \\ 
        = & -\sigma_a \norm{\ro}^2_\H - (\sigma_s + \eps^2 \sigma_a) \vvvert \g \vvvert^2_\H \\ 
        & - \tfrac12 (\t_\sT \ro^\srone - \t_\sB \ro^\srtwo)^\top (\It \otimes \I_{N_x} \otimes \bH_x) (\t_\sT \ro^\srone - \t_\sB \ro^\srtwo) \\
        & - \tfrac12 (\t_\sB \ro^\srone - \t_\sB \ro^\srone(0))^\top (\It \otimes \I_{N_x} \otimes \bH_x) (\t_\sB \ro^\srone - \t_\sB \ro^\srone(0)) \\
        & + \tfrac12 (\t_\sB \ro^\srone(0))^\top (\It \otimes \I_{N_x} \otimes \bH_x) \t_\sB \ro^\srone(0) \\
        & - \tfrac12 \eps^2 \mean{(\t_\sB \g^\srone - \t_\sB \g^\srone(0))^\top (\It \otimes \I_{N_x} \otimes \bH_x) (\t_\sB \g^\srone - \t_\sB \g^\srone(0))} \\
        & + \tfrac12 \eps^2 \mean{(\t_\sB \g^\srone(0))^\top (\It \otimes \I_{N_x} \otimes \bH_x) \t_\sB \g^\srone(0)}.
    \end{align*}
    We note that all terms on the right-hand side above is either non-positive or bounded by the initial data, and hence stability follows.
\end{proof}

\begin{proof}[Proof of Theorem \ref{thm:multislab_asymptotic}]
    We use the compact form of the scheme given by \eqref{eq:multislab_rho}-\eqref{eq:multislab_g} with the subsequent SATs, $\SAT_{\sss \rho}$ and $\SAT_{\sss g_k}$. Letting $\eps \to 0$, we obtain
    \begin{align*}
        (\I_2 \otimes \D_t) \ro + (\I_2 \otimes \tD_x)\mean{v\g} & = -\sigma_a \ro + \SAT_{\sss \rho}, \\
        v_k (\I_2 \otimes \tD_x) \ro & = -\sigma_s \g_k, \hspace{2em} k = 1, \ldots, n_v.
    \end{align*}
    Inserting the second equation into the first yields
    \begin{align*}
        (\I_2 \otimes \D_t)\ro & = (\I_2 \otimes \tD_x)\left( \tfrac{\mean{v^2}}{\sigma_s} (\I_2 \otimes \tD_x) \ro \right) - \sigma_a \ro + \SAT_{\sss \rho},
    \end{align*}
    which is a consistent scheme for the limit equation~\eqref{eq:heat}.
    What is left, is to show that the above is a stable approximation of \eqref{eq:heat}. To this end, we left multiply with $\ro^\top (\I_2 \otimes \H)$ to obtain 
    
    \begin{align*}
        \tfrac{1}{2} (\ro^\srtwo)^\top (\t_\sT \t_\sT^\top \otimes \I_{N_x} \otimes \bH_x) \ro^\srtwo = & - \sigma_a \norm{\ro}^2_\H \\
        & - \tfrac12 (\t_\sT \ro^\srone - \t_\sB \ro^\srtwo)^\top (\It \otimes \I_{N_x} \otimes \bH_x) (\t_\sT \ro^\srone - \t_\sB \ro^\srtwo) \\
        & - \tfrac12 (\t_\sB \ro^\srone - \t_\sB \ro^\srone(0))^\top (\It \otimes \I_{N_x} \otimes \bH_x) (\t_\sB \ro^\srone - \t_\sB \ro^\srone(0)) \\
        & + \tfrac12 (\t_\sB \ro^\srone(0))^\top (\It \otimes \I_{N_x} \otimes \bH_x) \t_\sB \ro^\srone(0).
    \end{align*}
    Since the right-hand side is bounded from above, stability follows.
\end{proof}

\bibliographystyle{abbrv}
\bibliography{bibliography}

\end{document}